% This is samplepaper.tex, a sample chapter demonstrating the
% LLNCS macro package for Springer Computer Science proceedings;
% Version 2.20 of 2017/10/04
%
\documentclass[runningheads]{llncs}
\usepackage{graphicx}
% Used for displaying a sample figure. If possible, figure files should
% be included in EPS format.
%
% If you use the hyperref package, please uncomment the following line
% to display URLs in blue roman font according to Springer's eBook style:
% \renewcommand\UrlFont{\color{blue}\rmfamily}
\usepackage{amsmath}
\usepackage{amssymb}
\usepackage{xcolor}
\usepackage{url}
\newif\ifdraft
\draftfalse
\newcommand{\mohit}[1]{\ifdraft{\color{cyan}[{\bf Mohit}: #1]}\fi}

\newcommand{\jb}[1]{\ifdraft{\color{blue}[{\bf JB}: #1]}\fi}
\usepackage{amsmath}

% Gains (a little less than) a page, but creates some errors

%% Save the class definition of \subparagraph
\let\llncssubparagraph\subparagraph
%% Provide a definition to \subparagraph to keep titlesec happy
\let\subparagraph\paragraph
%% Load titlesec
\usepackage[compact]{titlesec}
%% Revert \subparagraph to the llncs definition
\let\subparagraph\llncssubparagraph

\usepackage{titlesec}

\titlespacing*{\section}{0pt}{0.6ex}{0.6ex}
\titlespacing*{\subsection}{0pt}{0.6ex}{0.6ex}
\titlespacing*{\subsubsection}{0pt}{0.4ex}{0.4ex}

\begin{document}
\title{A formal proof of the Lax equivalence theorem for finite difference schemes}
%
%\titlerunning{Abbreviated paper title}
% If the paper title is too long for the running head, you can set
% an abbreviated paper title here
%
\author{Mohit Tekriwal \and Karthik Duraisamy \and
Jean-Baptiste Jeannin}
\authorrunning{M. Tekriwal et al.}
% First names are abbreviated in the running head.
% If there are more than two authors, 'et al.' is used.
%
\institute{University of Michigan, Ann Arbor, MI 48109, USA \\
\email{\{tmohit,kdur,jeannin\}@umich.edu}}
\maketitle              % typeset the header of the contribution

\begin{abstract}
The behavior of physical systems is typically modeled using differential equations which are too complex to solve analytically. In practical problems, these equations are discretized on a computational domain, and numerical solutions are computed.  A numerical scheme is called convergent, if in the limit of infinitesimal discretization, the bounds on the discretization error is also infinitesimally small. The approximate solution converges to the ``true solution'' in this limit. The Lax equivalence theorem enables a proof of convergence given consistency and stability of the method.

In this work, we formally prove the Lax equivalence theorem using the Coq Proof Assistant. We assume a continuous linear differential operator between complete normed spaces, and define an equivalent mapping in the discretized space. Given that the numerical method is consistent (i.e., the discretization error tends to zero as the discretization step tends to zero), and the method is stable (i.e., the error is uniformly bounded), we formally prove that the approximate solution converges to the true solution. We then demonstrate convergence of the difference scheme on an example problem by proving both its consistency and stability, and then applying the Lax equivalence theorem.  In order to prove consistency, we use the Taylor--Lagrange theorem by formally showing that the discretization error is bounded above by the $n^{th}$ power of the discretization step, where $n$ is the order of the truncated Taylor polynomial. 

\keywords{Lax equivalence theorem \and Finite difference scheme \and Convergence \and Taylor--Lagrange Theorem.}
\end{abstract}
\section{Introduction}
%\jb{Page limit 15 pages excluding bibliography and clearly marked appendices}

Physical systems are typically modeled by differential equations. For instance, the aerodynamics of an airplane can be represented by the Navier--Stokes equations~\cite{NavierSt76:online}, which are too complex to solve analytically.

Since analytical solutions are intractable for most practical problems of interest, numerical solutions are sought  in a discretized domain. The process of discretization in space and time results in approximate solutions to the governing equations.
A numerical scheme is called \textit{convergent}, if in the limit of infinitesimal discretization, the bound on the discretization error is also infinitesimally small. Under these conditions, the numerical solution converges or approaches the analytic solution. This idea is formally articulated by the Lax equivalence theorem~\cite{lax1956survey}, which states that if a numerical method is \textit{consistent} and \textit{stable}, then it is \textit{convergent}.  
%\jb{These are all standard definitions, we also need to focus on formal verification and explain why it's useful, and why we're doing this.}
%\jb{The NFM community will (most likely) not know anything about computational science}

\jb{Overall this paragraph is a little disorganized and needs to be crisper: what are we doing, why are we doing it, how are we doing it?}
Proofs of consistency, stability, and convergence are typically performed by hand, making them prone to possible errors.
Formal verification of mathematical proofs provides a much higher level of confidence of the correctness of manual proofs. Further, formal verification  offers a pathway to leverage mathematical constructs therein, and to extend these proofs to more complex scenarios. 
 
\jb{Some of this needs to move to Related Work. Maybe just keep a mention of Boldo and Taylor-Lagrange here.
Everything else moves to Related Work}

Recently, much effort has  been dedicated to the definition of mathematical structures such as metric spaces, normed spaces, derivatives, limits etc.  in a formal setting using proof assistants such as Coq \cite{o2008certified,boldo2015coquelicot,garillot2009packaging,martin2013certified}. Using automatic provers and proof assistants, a number of works have emerged in the formalization of numerical analysis \cite{boldo2013wave}. Pasca has formalized the properties of the Newton method~\cite{pasca2010formal}. Mayero et al. \cite{mayero2002using} presented a formal proof, developed in the Coq system, of the correctness of an automatic differentiation algorithm. Besides Coq, numerical analysis of ordinary differential equations has also been done in Isabelle/ HOL~\cite{immler2012numerical}. Immler et al.~\cite{Immler,immler2016flow,immler2019flow}, present a formalization of ordinary differential equations and the verification of rigorous (with guaranted error bounds) numerical algorithms in the interactive theorem prover Isabelle/HOL. The formalization comprises flow and Poincar\'e map of dynamical systems. Immler~\cite{10.1007/978-3-319-06200-6_9} implements a functional algorithm that computes enclosures of solutions of ODEs in the interactive theorem prover Isabelle/HOL. In~\cite{brehard2019certificate}, Brehard et al. present a library to verify rigorous approximations of univariate functions on real numbers, with the Coq proof assistant. Brehard~\cite{brehard2019calcul}, worked on rigorous numerics that aims at providing certified representations for solutions of various problems, notably in functional analysis. Work has also been done in formalizing real analysis for polynomials~\cite{cohen2010formalizing}. Boldo and co-workers \cite{boldo2013wave,boldo2014trusting,boldo2010formal} have made important contributions to formal verification of finite difference schemes. They proved consistency, stability and convergence of a second-order centered scheme for the wave equation. 
However, the Lax equivalence theorem -- sometimes referred to as the fundamental theorem of numerical analysis -- which is central to finite difference schemes, has not been formally proven in the general case. 

In this paper, we present a formal proof of the Lax equivalence theorem for a general family of finite difference schemes. We use the definitions of consistency and stability and  prove convergence. To prove the consistency of a second-order centered scheme for the wave equation, Boldo et al.~\cite{boldo2014trusting} made assumptions on the regularity of the exact solution. This regularity is expressed as the existence of Taylor approximations of the exact solution up to some appropriate order. Our formalization instead takes the Taylor--Lagrange theorem of \cite{martin2013certified}, to prove the consistency of a finite difference scheme of any order. It should be noted that the order of accuracy of an explicit finite difference scheme depends on the number of points in the discretized domain (called \textit{stencils}) appearing in the numerical derivative. Our approach is to carry the Taylor series expansion for each of those stencils using the Taylor--Lagrange theorem, and appropriately instantiate the order of the truncated polynomial, to achieve the desired order of accuracy. By incorporating the discretization error into the Lagrange remainder and proving an upper bound for the Lagrange remainder, we propose a rigorous method of proving consistency of a finite difference scheme.

Since the Lax equivalence theorem is an essential tool in the analysis of numerical schemes using finite differences, its formalization in the general case opens the door to the formalization and certification of finite difference-based numerical software.
The present work will enable the formalization of convergence properties for a large class of finite difference numerical schemes, thereby providing formal proofs of convergence properties usually proved by hand, making explicit the underlying assumptions, and increasing the level of confidence in these proofs.

Overall this paper makes the following contributions:
%\jb{List contributions with bullet points}
\begin{itemize}
    \item We provide a formalization in the Coq proof assistant of a general form of the Lax equivalence theorem.
    \item We prove consistency and stability of a second order accurate finite difference scheme for the example differential equation $\frac {d^{2}u}{dx^{2}}=1$.
    \item We formally apply the Lax equivalence theorem on this finite difference scheme for the example differential equation, thereby formally proving convergence for this scheme.
    \item We also provide a generalized framework for a symmetric tri-diagonal (sparse) matrix in Coq. We define its eigen system and provide an explicit formulation of its inverse in Coq. We show that since the symmteric tri-diagonal matrix is normal, one can perform the stability analysis by just uniformly bounding the eigen values of the inverse. This is important because discretizations of mathematical model of physical systems are usually sparse~\cite{KIRK2013217}.
\end{itemize}

This paper is structured as follows.
In Section~\ref{Lax_section}, we review the definitions of consistency, stability and convergence, state the Lax equivalence theorem~\cite{lax1956survey,sanz1985general}, and discuss its formalization in the Coq proof assistant.
In Section~\ref{finite}, we discuss the consistency of a finite difference scheme. In particular, we consider the central difference approximation of the second derivative and formally prove the order of accuracy using the Taylor--Lagrange theorem in the Coq proof assistant. We also relate the pointwise consistency of the finite difference scheme with the Lax equivalence theorem, by instantiating it with an example. In Section~\ref{stability_section}, we discuss the generalized formalization of a symmetric tri-diagonal matrix and later instantiate it with the scheme to prove stability of the scheme. In Section~\ref{Lax_apply}, we apply the Lax equivalence theorem to the concrete finite difference scheme that we are considering. 
In Section~\ref{conclusion}, we conclude by summarizing key takeaways from the paper, and discussing future work.

\section{Lax equivalence theorem}
\label{Lax_section}
In this section, we review the definitions of consistency, stability and convergence, discuss the problem set up  and state the Lax equivalence theorem~\cite{lax1956survey}. 
In this paper and for the formalization, we choose to follow the presentation of Sanz-Serna and Palencia~\cite{sanz1985general}.
We also discuss the proof of the Lax equivalence theorem which is then formalized in the Coq proof assistant.

%\karthik{Mohit: You CANNOT copy full sentences verbatim!!, If  so, keep citing the article every time you do so.}

\subsection{Consistency, Stability and Convergence}

%\jb{Protip: Don't put a space before a forced space \~{}, it looks awful}
\begin{definition}[The Continuous Problem~\cite{sanz1985general}]
Let $X$ (the space of solutions) and $Y$ (the space of data) be normed spaces, both real or both complex. We consider a linear operator $A$ with domain $D \subset X$ and range $R\subset Y$. The problem to be solved is of the form
\begin{equation} \label{true}\small
    Au=f, \qquad f\in Y
\end{equation}
\end{definition}
Here $A$ is not assumed to be bounded, so that unbounded differential operators are included. The problem~(\ref{true}) is assumed to be well-posed, i.e., there exists a \textit{bounded, linear operator}, $E\in B(Y,X)$, such that $EA=I$ in $D$, and that for $f\in Y$, equation (\ref{true}) has a unique solution, $u=Ef$. Furthermore, the solution $u$ depends continuously on the data.
\begin{definition}[The Approximate Problem~\cite{sanz1985general}]
Let $H$ be a set of positive numbers such that $0$ is the unique limit point of $H$. For each $h \in H$ , let $X_{h}, Y_{h}$ be normed spaces and consider the approximate or discretized problem
\begin{equation}\label{approximate}\small
    A_{h}u_{h}=f_{h},\qquad f_{h} \in Y_{h}
\end{equation}
where $A_{h}$ is a linear operator $A_{h}: X_{h}\longrightarrow Y_{h}$.
\end{definition}
We assume that for each $h \in H$, problem (\ref{approximate}) is well-posed and there exists a solution operator, $E_{h}=A_{h}^{-1}$, i.e. $u_{h}=E_{h}f_{h}$. The true solution $u$ and the approximate solution $u_{h}$ can be related with each other by defining a \textit{bounded, linear operator}, $r_{h}: X \to X_{h}$ for each $h \in H$. Similarly, data $f \in Y$ can be related to data in a discrete space, $f_{h} \in Y_{h}$ by defining a restriction operator $s_{h}$. For each $h \in H$, $s_{h}: Y \to Y_{h}$ is also a \textit{bounded, linear operator}.  We assume that the operator norms can be \textcolor{black}{uniformly} bounded:
\begin{equation}\small
    ||r_{h}||\leq C_{1}, \qquad ||s_{h}||\leq C_{2},
\end{equation}
where the constants $C_{1},C_{2}$ are independent of $h$. The true solution $u=Ef$ is compared with the discrete solution $u_{h}=E_{h}s_{h}f$ corresponding to the discretized datum $f$.
The family $(X_{h}, Y_{h}, A_{h},r_{h},s_{h})$ defines a \textit{method} for the solution of (\ref{true})~\cite{sanz1985general}.
\begin{definition}[Convergence~\cite{sanz1985general}]
Let $f$ be a given element in $Y$. The method $(X_{h},Y_{h},A_{h},r_{h},s_{h})$ is convergent for the problem (\ref{true}) if 
\begin{equation}\small \label{convergence}
    \lim_{h \to 0} ||r_{h}Ef-E_{h}s_{h}f||_{X_{h}}=0
\end{equation}
We say that the method is convergent if it is convergent for each problem (\ref{true}) for any $f$  in $Y$.
\end{definition}

%\mohit{ added some text here}
Intuitively, this means that in the limit of the discretization step, $h$, tending to zero, the numerical solution $E_{h}s_{h}f$ approaches the analytical solution $r_{h}Ef$. The analytical solution $r_{h}Ef$ is the restriction of the true (analytical) solution, $u=Ef$, onto the grid of size $N=1/h$, and $E_{h}s_{h}f$ is the discrete solution, $u_{h}=E_{h}f_{h}$ computed on the grid of size $N$. 

\begin{definition}[Consistency~\cite{sanz1985general}]
Let $u$ be a given element in $D$. The method is consistent at $u$ if 
\begin{equation}\small \label{consistency}
    \lim_{h \to 0} ||A_{h}r_{h}u - s_{h}Au||_{Y_{h}} = 0
\end{equation}
A method is consistent if it is consistent at each $u$ in a set $D_{o}$ such that the image $A(D_{o})$ is dense in $Y$.
\end{definition}

%\mohit{text added here}
Intuitively, this means that in the limit of the discretization step, $h$, tending to zero, the finite difference scheme $A_{h}u_{h}=f_{h}$ approaches the differential equation $Au=f$, i.e., we are discretizing the right differential equation. 

\begin{definition}[Stability~\cite{sanz1985general}]
The method is stable if there exists a constant $K$ such that
\begin{equation}\small \label{stability}
    ||E_{h}||_{B(Y_{h},X_{h})} \leq K
\end{equation}
\end{definition}
%

%\mohit{text added here}
Intuitively, stability of the numerical scheme means that a small numerical perturbation does not allow the solution to blow up. Uniform boundedness of the inverse $E_{h}=A_{h}^{-1}$ is a check on the conditioning of matrices (sensitivity to small perturbations), i.e., it ensures that the matrix $A_{h}$ is not ill-conditioned. Thus, if the numerical problem~(\ref{approximate}) were unstable, even though we were trying to solve the right differential equation, we would never converge to the true solution. Hence, both stability and consistency are sufficient for proving convergence of the numerical scheme.

The quantities within the norms (\ref{convergence}) and (\ref{consistency}) are, respectively, the \textit{global} and \textit{local} discretization errors.

\begin{theorem}[Lax equivalence theorem~\cite{sanz1985general}]\label{Lax}
Let
$(X,Y,A,X_{h},Y_{h},A_{h},r_{h},s_{h})$ be as above. If the method is consistent and stable, then it is convergent.
\end{theorem}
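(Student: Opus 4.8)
The plan is to derive a single algebraic identity relating the global discretization error to the local (consistency) error, and then to close the argument using stability together with a density argument. First I would rewrite the global error $r_h E f - E_h s_h f$ by inserting a copy of the identity on $X_h$. Since $E_h = A_h^{-1}$ we have $E_h A_h = I$ on $X_h$, and because $r_h E f \in X_h$ this yields $r_h E f = E_h A_h r_h E f$. Setting $u = Ef$ and invoking well-posedness of the continuous problem ($u = Ef$ solves $Au=f$, so $f = Au$ and hence $s_h f = s_h A u$), the two substitutions give
\begin{equation}\small
r_h E f - E_h s_h f = E_h\bigl(A_h r_h E f - s_h f\bigr) = E_h\bigl(A_h r_h u - s_h A u\bigr), \qquad u = Ef,
\end{equation}
so that the bracketed quantity is exactly the local error appearing in the definition of consistency.

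Next I would take norms and apply sub-multiplicativity of the operator norm together with the stability bound $||E_h|| \le K$, obtaining
\begin{equation}\small
||r_h E f - E_h s_h f||_{X_h} \le ||E_h||_{B(Y_h,X_h)}\,||A_h r_h u - s_h A u||_{Y_h} \le K\,||A_h r_h u - s_h A u||_{Y_h}.
\end{equation}
For every $f$ of the form $f = Au$ with $u \in D_o$, consistency at $u$ drives the right-hand side to $0$ as $h \to 0$, which establishes convergence at all such $f$.

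To upgrade this to convergence for an arbitrary $f \in Y$, I would use that $A(D_o)$ is dense in $Y$. Given $\varepsilon > 0$, I would pick $g \in A(D_o)$ with $||f - g||_Y$ small and split
\begin{equation}\small
r_h E f - E_h s_h f = r_h E (f-g) + \bigl(r_h E g - E_h s_h g\bigr) + E_h s_h (g-f).
\end{equation}
The middle term tends to $0$ by the case already proved, while the two outer terms are bounded by $(C_1\,||E|| + K C_2)\,||f - g||_Y$ using $||r_h|| \le C_1$, $||s_h|| \le C_2$, stability, and boundedness of $E$. Since $||f - g||_Y$ can be made arbitrarily small, it follows that $\limsup_{h \to 0} ||r_h E f - E_h s_h f||_{X_h} = 0$, i.e. the method is convergent.

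The main obstacle I anticipate is not the core identity, which is short and purely algebraic, but the density step and its formalization in Coq: one must discharge the $\limsup$/$\varepsilon$ reasoning, compose bounded operators while tracking their separate norm bounds, and keep the four distinct norms on $X$, $Y$, $X_h$, and $Y_h$ straight throughout. Carefully applying the well-posedness hypothesis (in the form $AEf = f$) and the cancellation $E_h A_h = I$ in the formal development is where I expect most of the effort to go.
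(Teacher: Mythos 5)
Your core argument is exactly the paper's proof: insert the identity via $E_{h}A_{h}=I$, use well-posedness to replace $s_{h}f$ by $s_{h}Au$ with $u=Ef$, take norms, apply the stability bound $||E_{h}||\leq K$, and let consistency drive the local error to zero. Where you genuinely diverge is the final density step. The paper stops after applying consistency directly at $u=Ef$; its displayed proof (and the Coq formalization, whose convergence statement takes the pointwise consistency limit at the given $u$ as an explicit hypothesis) tacitly assumes that the particular $u$ of interest lies in the set where consistency holds. You instead treat the definitions literally -- consistency is only given on a set $D_{o}$ with $A(D_{o})$ dense in $Y$, while convergence is demanded for every $f\in Y$ -- and close the gap with the three-term splitting
\begin{equation*}\small
r_{h}Ef-E_{h}s_{h}f = r_{h}E(f-g)+\bigl(r_{h}Eg-E_{h}s_{h}g\bigr)+E_{h}s_{h}(g-f),
\end{equation*}
bounding the outer terms by $(C_{1}\,||E||+KC_{2})\,||f-g||_{Y}$ using the uniform bounds on $r_{h}$, $s_{h}$, $E$, and stability. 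That is the complete Sanz-Serna--Palencia argument, and your accounting of which operator-norm bounds are needed is correct. What the paper's shorter route buys is a much lighter formalization burden: no $\limsup$/$\varepsilon$ bookkeeping and no approximation of $f$ by elements of $A(D_{o})$, at the cost of proving a slightly weaker statement (convergence only for those $f$ whose solution $Ef$ satisfies the consistency limit). Your version buys the theorem at full strength, and your anticipated difficulty -- that the density step dominates the formalization effort -- is exactly why the paper avoids it.
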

\begin{proof}\small
We start  with the definition of \textit{convergence} in (\ref{convergence}),
\begin{align*}\small
     & \lim_{h \to 0}||r_{h}Ef-E_{h}s_{h}f||_{X_{h}}\\
     &= \lim_{h \to 0}||r_{h}u-E_{h}s_{h}f||_{X_{h}}\quad (u\overset{\Delta}{=} Ef)\\
        & = \lim_{h \to 0}||r_{h}u-E_{h}s_{h}Au||_{X_{h}}\quad (f \overset{\Delta}{=} Au)\\
        & = 
        \lim_{h \to 0}||Ir_{h}u-E_{h}s_{h}Au||_{X_{h}}\quad(r_{h}u=Ir_{h}u)\\
        & =
        \lim_{h \to 0}||E_{h}A_{h}r_{h}u-E_{h}s_{h}Au||_{X_{h}}\quad (E_{h}A_{h}\overset{\Delta}{=}I)\\
        & \leq \lim_{h \to 0}||E_{h}||_{B(Y_{h},X_{h})}||(A_{h}r_{h}u-s_{h}Au)||_{Y_{h}} \\
        & \leq K \lim_{h \to 0}||(A_{h}r_{h}u-s_{h}Au)||_{Y_{h}} \quad (\text{From stability: } (\ref{stability}))\\ 
        & = 0 \quad (\text{From Consistency:  } (\ref{consistency})) 
\end{align*}
\end{proof}
\subsection{Formalization in the Coq Proof Assistant}
%\jb{This is still quite detailedThis is way too detailed and at the same time difficult to read, because you don't give the intuition. What is important is to give the high-level picture, not to give all the tiny details (especially since Coq already checks all the tiny details).
%See Boldo et al.'s papers for some examples on how this is done.}
%\mohit{Provided a detailed description along with main definitions while laying out all important details required for formalizing the proof}
In this Section we show how we formalized the proof of the Lax equivalence theorem \cite{sanz1985general} in the Coq proof assistant.
All of the Coq formal proofs mentioned in this paper, containing the proofs of consistency, stability and convergence of finite difference schemes, and of the Lax equivalence theorem, are available at  \url{http://www-personal.umich.edu/~jeannin/papers/NFM21.zip}.

The \texttt{Coquelicot} library~\cite{boldo2015coquelicot,Coquelic9:online} defines mathematical structures required for implementing the proof. \textcolor{black}{Since we use Coquelicot and standard reals library which are based on classical axiomatization of reals, our proofs are also non-constructive~\cite{boldo2015coquelicot}.} We define the \textit{Banach spaces} (complete normed spaces, complete in the metric defined by the norm \cite{kreyszig1978introductory}) $(X,Y,X_{h},Y_{h})$ using a canonical structure, \texttt{CompleteNormedModule}, in Coq \cite{garillot2009packaging}. 

%\jb{Unnecessarily detailed, can save space}
The definitions of the true problem (\ref{true}) and the approximate problem (\ref{approximate}) require that the mappings $A: X \to Y $ and $A_{h}: X_{h} \to Y_{h}$ be linear, and the solution operators $E: Y \to X$ and $E_{h}: Y_{h} \to X_{h}$ be linear and bounded. The linear mappings $A_{h}$ and $E_{h}$ are defined as functions of $h \in \mathbb{R}$.
Boldo et al.~\cite{boldo2017coq} have defined linear mapping in the context of a \texttt{ModuleSpace} and bounded linear mapping in the context of a \texttt{NormedModule} in their formalization of the \textit{Lax Milgram Theorem} in Coq~\cite{httpswww42:online,FlorianF30:online}. We extended these definitions in the context of \texttt{CompleteNormedModule}. 

The definition of \textit{consistency} (\ref{consistency}) and \textit{convergence} (\ref{convergence}) hold in the limit of $h$ tending to zero. Thus, an important step in the proof is to express these limits in Coq. Formally, the notion of $f$ tending to $l$ at the limit point $x$ requires, for any $\epsilon > 0$, to find a neighborhood $V$ of $x$ such that any point $u$ of $V$ satisfies $|f(u)-l|<\epsilon$ \cite{boldo2015coquelicot}. This notion has been formalized in \texttt{Coquelicot} \cite{Coquelic9:online} using the concept of \textit{filters}. In topology, a filter is a set of sets, which is nonempty, upward closed, and closed under intersection \cite{cohen2017formal}. It is commonly used to express the notion of convergence in topology. We have used a filter, \texttt{locally x} \cite{lelay2015express} to denote an open neighborhood of $x$, and predicate \texttt{filterlim} \cite{lelay2015express} to formalize the notion of convergence (in the context of limits) of $f$ towards $l$ at limit point $x$, i.e. $\lim_{x \to a} f(x) =l$. Therefore, the definition of consistency (\ref{consistency}) is expressed as:
\begin{small}
\begin{verbatim}
(is_lim (fun h:R => norm (minus (Ah h (rh h u)) (sh h (A u)))) 0 0
\end{verbatim}
\end{small}
where the limits of functions is expressed using \texttt{is\_lim}~\cite{boldo2015coquelicot}.

We next discuss the formalization of the statement of convergence of a finite difference scheme in Coq. We note that from Theorem~\ref{Lax}, \textit{consistency} and \textit{stability} imply \textit{convergence}. This notion is expressed in Coq as follows:
\mohit{Hopefully, this is addressed in the following paragraphs}
\jb{present math first, Coq next}
\begin{small}
\begin{verbatim}
(is_lim (fun h:R => norm (minus (Ah h (rh h u)) (sh h (A u))))  0 0 
    (*Consistency*) /\ 
(exists K:R , forall (h:R), operator_norm(Eh h)<=K ) (* Stability*) -> 
is_lim(fun h:R=>norm (minus (rh h (E(f))) (Eh h (sh h (f))))) 0 0)
    (*Convergence*).
\end{verbatim}
\end{small}
where the \textit{operator norm} is defined as $||f||_{\phi}=sup_{u \neq 0_{E}\land \phi(u)}\frac {||f(u)||_{F}}{||u||_{E}}$ and has been formally defined in \cite{boldo2017coq}.

The basic idea is that we  bound the  \textit{global discretization error} ($||r_{h}Ef - E_{h}s_{h}f||)$  above using the stability criterion, i.e. $||r_{h}Ef-E_{h}s_{h}f|| \leq K  ||A_{h}r_{h}u - s_{h}Au||$,  and then prove that as the \textit{local discretization error} ($||A_{h}r_{h}u - s_{h}Au||)$ tends to zero in the limit of $h$ tending to zero, the upper bound on the global discretization error tends to zero (using the property of limits). Using the property of norm , i.e. $0 \leq ||r_{h}Ef- E_{h}s_{h}f||$, we arrive at the inequality
\begin{equation*}\small
    0 \leq ||r_{h}Ef- E_{h}s_{h}f|| \leq K ||A_{h}r_{h}u - s_{h}Au||
\end{equation*}
In Coq, we define the lower bound of the inequality as a constant function with value $0$ as: \texttt{fun\;\_ => 0}.
Since 
the limit of a constant function is the constant itself, i.e. $\lim_{h \to 0} 0 =0$, 
and 
$lim_{h\to 0}||A_{h}r_{h}u-s_{h}Au|| = 0$ (Consistency), using the \textit{sandwich theorem} for limits,
$\lim_{h\to 0}||r_{h}Ef- E_{h}s_{h}f||=0$. The \textit{sandwich theorem} states that if we have functions obeying the inequality: $f(x)\leq g(x) \leq h(x)$ and $\lim_{x \to a}f(x)=L \quad \land \quad \lim_{x \to a}h(x)=L$ on some open neighborhood of $x=a$ , then $\lim_{x \to a}g(x)=L$. This proves the convergence of Definition~\ref{convergence} and completes the proof of the Lax equivalence theorem.

\section{Proof of consistency of a sample finite difference scheme}
%\jb{Are you focusing on one scheme in particular?}
%\jb{This section is about the consistency of finite central difference scheme for second order derivative}
\label{finite}

A finite difference scheme (FD) approximates a differential equation with a difference equation. The derivatives are expressed in terms of function values at finite number of points in the dicretized domain. For instance, consider a simple differential equation, $\frac {d^{2} u}{d x^{2}}=1$ on a domain $x \in (0,L)$ with boundary conditions $u(0)=0$ and $u(L)=0$, where L is the length of the domain. A second order accurate finite difference approximation would be $\frac {u(x+\Delta x)-2u(x)+u(x-\Delta x)}{\Delta x ^2}=1$, where $\Delta x$ is the discretization step and $x$ is the point at which the difference equation is evaluated. We will refer to this as numerical scheme $\mathcal{N}_h$. Since we are computing a numerical approximation to the actual derivatives, we are interested in knowing the order of the discretization error.
\begin{definition}[Discretization error]\small
    Let $D(u)$ denote the true derivative of a function $u:\mathbb{R} \to \mathbb{R}$ and $N(u)$ denote the finite difference approximation of the true derivative. The discretization error (commonly referred to as the truncation error) ($\tau$) is then defined as:
    \begin{equation}
        \tau \overset{\Delta}{=} D(u)-N(u)
    \end{equation}
\end{definition}
If the function $u$ is \textit{analytic}, it can be expressed as a \textit{Taylor series expansion} at the point of evaluation. The truncation error is then evaluated by expressing the numerical derivatives in terms of a truncated Taylor polynomial and then taking a difference of the true derivative and the numerical derivative. This gives us an upper bound on the discretization error. If a numerical method is consistent, the truncation error can be expressed as:
%\jb{What does $\sim$ mean? This is formal mathematics, not handwaving.}
\begin{equation*}\small
    \tau = \mathcal{O}(\Delta x ^{n})
\end{equation*}
when $\Delta x$ tends to zero, and where $n$ is the order of the truncated Taylor polynomial. We use this idea to formalize the proof of consistency of a finite difference scheme. This requires the use of an important theorem from calculus, the Taylor--Lagrange theorem.

\begin{theorem}[Taylor--Lagrange theorem]\label{Taylor_Lagrange}\small
    Suppose that $f$ is $n+1$ times differentiable on some interval containing the center of convergence $c$ and $x$, and let $P_{n}(x)= f(c)+\frac {f^{(1)}(c)}{1!}(x-c)+\frac{f^{2}(c)}{2!}(x-c)^{2}+..+\frac{f^{(n)}(c)}{n!}(x-c)^{n}$ be the $n^{th}$ order Taylor polynomial of $f$ at $x=c$. Then $f(x)=P_{n}(x)+E_{n}(x)$ where $E_{n}(x)$ is the error term of $P_{n}(x)$ from $f(x)$. i.e. $E_{n}=f(x)-P_{n}(x)$, and for $\xi$ between $c$ and $x$, the Lagrange remainder form of the error $E_{n}$ is given by the formula $E_{n}(x)=\frac{f^{n+1}(\xi)}{(n+1)!} (x-c)^{(n+1)}$.
\end{theorem}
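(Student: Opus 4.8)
The plan is to reduce Theorem~\ref{Taylor_Lagrange} to a single application of Rolle's theorem by constructing a suitable auxiliary function. Since $E_n(x) = f(x) - P_n(x)$ is already the definition of the error term, the actual content of the theorem is to identify this quantity with $\frac{f^{(n+1)}(\xi)}{(n+1)!}(x-c)^{n+1}$ for some intermediate point $\xi$. First I would fix $x$ and introduce the constant $M$ determined by $f(x) = P_n(x) + M(x-c)^{n+1}$, i.e. $M = E_n(x)/(x-c)^{n+1}$ (the degenerate case $x = c$ is trivial, since then $E_n = 0$ and the right-hand factor vanishes). The goal then becomes showing that $M = f^{(n+1)}(\xi)/(n+1)!$ for some $\xi$ strictly between $c$ and $x$.

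Next I would define, for $t$ ranging over the closed interval bounded by $c$ and $x$, the auxiliary function
\begin{equation*}\small
g(t) = f(x) - \sum_{k=0}^{n} \frac{f^{(k)}(t)}{k!}(x-t)^k - M(x-t)^{n+1}.
\end{equation*}
By construction $g(x) = f(x) - f(x) - 0 = 0$, and $g(c) = f(x) - P_n(x) - M(x-c)^{n+1} = 0$ by the choice of $M$. The hypothesis that $f$ is $n+1$ times differentiable on an interval containing $c$ and $x$ guarantees that $g$ is continuous on the closed interval (the derivatives $f, f', \dots, f^{(n)}$ appearing in $g$ are continuous, being differentiable) and differentiable on the open interval. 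Rolle's theorem then yields a point $\xi$ strictly between $c$ and $x$ with $g'(\xi) = 0$.

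The crux is the computation of $g'(t)$. Differentiating each summand with the product rule gives $\frac{d}{dt}\big[\frac{f^{(k)}(t)}{k!}(x-t)^k\big] = \frac{f^{(k+1)}(t)}{k!}(x-t)^k - \frac{f^{(k)}(t)}{(k-1)!}(x-t)^{k-1}$, and after an index shift the sum telescopes across $k = 0,\dots,n$, leaving only $\frac{f^{(n+1)}(t)}{n!}(x-t)^n$; adding the derivative of the last term $-M(x-t)^{n+1}$ then yields $g'(t) = -\frac{f^{(n+1)}(t)}{n!}(x-t)^n + (n+1)M(x-t)^n$. Setting $g'(\xi)=0$ and dividing by $(x-\xi)^n \neq 0$ (valid since $\xi \neq x$) gives $M = f^{(n+1)}(\xi)/(n+1)!$, and substituting back into $E_n(x) = M(x-c)^{n+1}$ completes the argument.

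I expect the main obstacle to be the telescoping derivative identity: carrying it out rigorously requires the product rule at each order together with a careful reindexing so the cancellation is exact, and in a formal setting this is most cleanly established by induction on $n$ rather than by informal ``$\dots$'' reasoning. A secondary technical point is handling ``between $c$ and $x$'' uniformly regardless of whether $c < x$ or $x < c$, which is cleanest if Rolle's theorem is stated on the unordered interval. In the formalization itself, rather than redoing this argument from scratch, the natural move is to instantiate the Taylor--Lagrange lemma already available in the library of~\cite{martin2013certified}, matching its differentiability and interval hypotheses to those of Theorem~\ref{Taylor_Lagrange}.
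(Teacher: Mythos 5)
Your Rolle's-theorem argument with the auxiliary function $g(t) = f(x) - \sum_{k=0}^{n} \frac{f^{(k)}(t)}{k!}(x-t)^k - M(x-t)^{n+1}$ is the standard correct proof, and the telescoping computation of $g'$ checks out. Note, however, that the paper never proves this theorem itself: it simply imports the formalization from the \texttt{Coq.Interval} library of Martin-Dorel et al.\ and applies it, which is exactly the move you anticipate in your final paragraph, so your operative approach coincides with the paper's and the Rolle argument is a (correct) supplement the paper does not spell out.
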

Martin-Dorel et al. \cite{martin2013certified} proved the Taylor--Lagrange theorem formally in Coq, and it is available in the \texttt{Coq.Interval} library \cite{Interval66:online,brisebarre2012rigorous}. We used this formalization of the Taylor--Lagrange theorem to prove the consistency of a finite difference scheme.

We will specifically prove that for a central difference approximation of the second derivative, $\frac {d^{2}u}{dx^{2}}$, expressed as : $\frac {u(x+\Delta x)-2 u(x)+u(x-\Delta x)}{(\Delta x)^{2}}$, the truncation error $\tau$ is quadratic in $\Delta x$: %\jb{Make it a theorem}
\begin{equation*}\small
    \tau = \left | \frac{d^{2}u}{dx^{2}}- \frac {u(x+\Delta x)-2u(x)+u(x-\Delta x)}{(\Delta x)^2} \right | = \mathcal{O}(\Delta x ^2)
\end{equation*}

\subsection{Proof of consistency for the finite difference scheme}\label{point_const}
 We want to prove that for a central difference approximation of the second derivative in the numerical scheme  $\mathcal{N}_h$, the truncation error, $\tau= \mathcal{O}(\Delta x^2)$.
 By invoking the definition of Big-O notation, the theorem statement can be stated as:
 \begin{equation}\label{cons_2}\small 
 \exists \gamma >0, \Gamma>0, \left | \frac{d^{2}u}{dx^{2}} - \frac { u(x+\Delta x)-2u(x)+ u(x-\Delta x)}{(\Delta x)^2}\right |\leq \Gamma (\Delta x ^2), \; 0<|\Delta x|<\gamma.  
 \end{equation}
The equation (\ref{cons_2}) is stated formally in Coq as:
\begin{small}
 \begin{verbatim}
Theorem taylor_FD (x:R): Oab x ->exists gamma:R, gamma >0 /\ exists G:R, 
G>0/\ forall dx:R, dx>0 -> Oab (x+dx) -> Oab (x-dx)->(dx< gamma ->
Rabs((D 0 (x+dx)- 2*(D 0 x) + D 0 (x-dx))*/(dx * dx)- D 2 x)<= G*(dx^2)).
\end{verbatim}
\end{small}where \texttt{Oab x} mean $a < x < b$ and \texttt{D k x} denotes $k^{th}$ derivative of $u$ with respect to x. \\
We start by introducing the following lemmas required to complete the proof.

\begin{small}
\begin{lemma}
[$|F(x)|\sim \mathcal{O}(\Delta x)^4$]
\label{lem:lem1}
    $\forall x \in (a,b),\exists\; \eta \in \mathbb{R}, \eta>0 \land \exists \;M \in \mathbb{R}, M>0 \land\\ \forall \Delta x \in \mathbb{R}, \Delta x >0
    \to (x+\Delta x) \in (a,b) \to \Delta x < \eta \to |F(x)|\leq M(\Delta x)^4.$
\end{lemma}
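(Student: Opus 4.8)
The plan is to recognize $F(x)$ as the Lagrange remainder of the degree-three Taylor expansion of $u$ about the center $c = x$, evaluated at the point $x + \Delta x$; that is,
\[
F(x) = u(x+\Delta x) - \left[ u(x) + \frac{u^{(1)}(x)}{1!}\Delta x + \frac{u^{(2)}(x)}{2!}(\Delta x)^2 + \frac{u^{(3)}(x)}{3!}(\Delta x)^3 \right],
\]
where $F$ depends implicitly on $\Delta x$ as well as on $x$. The statement is then a direct consequence of the Taylor--Lagrange theorem (Theorem~\ref{Taylor_Lagrange}) instantiated at $n = 3$. First I would apply that theorem with $f = u$, center $c = x$, and evaluation point $x + \Delta x$, which requires $u$ to be four times differentiable on an interval containing $x$ and $x + \Delta x$. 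The hypotheses $x \in (a,b)$ and $(x + \Delta x) \in (a,b)$ together guarantee that this interval lies inside $(a,b)$, so the differentiability requirement is met on the relevant range.

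Having applied Taylor--Lagrange, I obtain a point $\xi$ strictly between $x$ and $x + \Delta x$ such that
\[
F(x) = \frac{u^{(4)}(\xi)}{4!}(\Delta x)^4 = \frac{u^{(4)}(\xi)}{24}(\Delta x)^4 .
\]
Taking absolute values gives $|F(x)| = \frac{|u^{(4)}(\xi)|}{24}(\Delta x)^4$, so it remains only to produce a single constant $M > 0$ bounding $\frac{|u^{(4)}(\xi)|}{24}$ uniformly over all admissible $\Delta x$. Since $\xi \in (x, x+\Delta x) \subset (a,b)$ whenever $x + \Delta x \in (a,b)$, I would choose $M$ from an upper bound on $|u^{(4)}|$ over $(a,b)$, and pick $\eta$ as any convenient positive number (for instance $\eta = b - x$) keeping $x + \Delta x$ in range; the desired inequality $|F(x)| \leq M(\Delta x)^4$ then follows immediately.

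The main obstacle is exactly this uniform bound on $|u^{(4)}|$. Because the witness $\xi$ returned by Taylor--Lagrange depends on both $x$ and $\Delta x$, the constant $M$ must be fixed \emph{before} $\xi$ is known and therefore cannot be allowed to depend on $\xi$. In the pen-and-paper argument this is handled by continuity of $u^{(4)}$ on the compact closure $[a,b]$ together with the extreme value theorem; in the Coq development it amounts either to assuming an explicit bound on $|u^{(4)}|$ over $(a,b)$ as a hypothesis, or to deriving one from such a compactness argument. The quantifier ordering in the statement, with $M$ scoped outside the $\forall \Delta x$, is precisely what forces this uniform bound to be extracted first, and is the delicate point when structuring the formal proof. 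Once $M$ is fixed in this way, the remainder of the argument is the routine arithmetic of isolating the Lagrange term, which the formalization of Taylor--Lagrange in \cite{martin2013certified} supplies directly.
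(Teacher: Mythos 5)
Your proposal matches the paper's proof: both apply the Taylor--Lagrange theorem at order $n=3$ centered at $x$ to express $F(x)$ as $\frac{u^{(4)}(\xi)}{4!}(\Delta x)^4$, choose $\eta = b-x$, and obtain the uniform constant $M$ from the extreme values of $u^{(4)}$ on the compact interval (the paper uses \texttt{continuity\_ab\_max} and \texttt{continuity\_ab\_min} on $[x,b]$ and sets $M$ to the larger of the two absolute values). The only cosmetic difference is that you bound over $[a,b]$ rather than $[x,b]$, which changes nothing essential.
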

Here, $F(x)$ is the Lagrange remainder in the expansion of $u(x+\Delta x)$ up to degree 3 and is defined as:
\begin{equation}\label{def_1}
 F(x) \overset{\Delta}{=} u(x+\Delta x)-u(x)-\Delta x \frac{du}{dx}\Big|_x -\frac{1}{2!}(\Delta x)^{2}\frac{d^{2}u}{dx^2}\Big|_x-
\frac{1}{3!}(\Delta x)^{3}\frac{d^{3}u}{dx^{3}}\Big|_x    
\end{equation}
Thus, Lemma~\ref{lem:lem1} states that the Lagrange remainder $F(x)= \frac{1}{4!}(\Delta x)^4 \frac{d^4 u(\xi)}{dx^4}$ is  of order $(\Delta x)^4$ for all $\xi \in (x, x+\Delta x)$.

\begin{lemma}
[$|G(x)|\sim \mathcal{O}(\Delta x)^4$]
\label{lem:lem2}
    $\forall x \in (a,b), \exists\; \delta \in \mathbb{R}, \delta>0 \land \exists \;K \in \mathbb{R}, K>0 \land \\
    \forall \Delta x \in \mathbb{R}, \Delta x > 0 \to (x-\Delta x) \in (a,b)\to \Delta x < \delta \to |G(x)|\leq K(\Delta x)^4.$
\end{lemma}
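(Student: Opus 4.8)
The plan is to prove Lemma~\ref{lem:lem2} in exact parallel with Lemma~\ref{lem:lem1}, the only change being that the forward increment $x+\Delta x$ is replaced by the backward increment $x-\Delta x$. Here $G(x)$ denotes the degree-$3$ Lagrange remainder of the backward expansion, namely
\begin{equation*}\small
G(x) \overset{\Delta}{=} u(x-\Delta x)-u(x)+\Delta x\frac{du}{dx}\Big|_x-\frac{1}{2!}(\Delta x)^2\frac{d^2u}{dx^2}\Big|_x+\frac{1}{3!}(\Delta x)^3\frac{d^3u}{dx^3}\Big|_x,
\end{equation*}
so that, up to the Taylor--Lagrange identity, $G(x)=\frac{1}{4!}(\Delta x)^4\frac{d^4u(\xi)}{dx^4}$ for some $\xi$ between $x-\Delta x$ and $x$. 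Note the sign flips on the odd-order terms compared with~(\ref{def_1}), which arise from the odd powers of $-\Delta x$.

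First I would instantiate the Taylor--Lagrange theorem (Theorem~\ref{Taylor_Lagrange}) with order $n=3$, center $c=x$, and evaluation point $x-\Delta x$. This yields a point $\xi\in(x-\Delta x,x)$ with $G(x)=\frac{u^{(4)}(\xi)}{4!}\,((x-\Delta x)-x)^4$. The key simplification is that $((x-\Delta x)-x)^4=(-\Delta x)^4=(\Delta x)^4$, so the sign introduced by the backward step disappears after raising to the fourth power and we recover exactly the same remainder magnitude as in Lemma~\ref{lem:lem1}. Taking absolute values gives $|G(x)|=\frac{1}{4!}(\Delta x)^4\,|u^{(4)}(\xi)|$.

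It then remains to produce a radius $\delta>0$ and a constant $K>0$, both independent of $\Delta x$, that bound $\frac{1}{4!}|u^{(4)}(\xi)|$ uniformly. Since $x\in(a,b)$ and $(a,b)$ is open, I would first fix $\delta>0$ small enough that $[x-\delta,x]\subset(a,b)$. Whenever $0<\Delta x<\delta$ the hypothesis $(x-\Delta x)\in(a,b)$ holds and $\xi\in(x-\Delta x,x)\subset[x-\delta,x]$; using the regularity (analyticity) assumed of $u$, its fourth derivative is continuous on the compact interval $[x-\delta,x]$ and hence attains a maximum $M_4=\max_{t\in[x-\delta,x]}|u^{(4)}(t)|$. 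Setting $K=\frac{M_4+1}{4!}$ then delivers $|G(x)|\le K(\Delta x)^4$ for every admissible $\Delta x$.

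The main obstacle is this last step: the point $\xi$ supplied by Taylor--Lagrange is existentially quantified and depends on $\Delta x$, so the bound on $|u^{(4)}(\xi)|$ must be made uniform before $\Delta x$ is introduced. In Coq this forces the witnesses $\delta$ and $K$ to be chosen in the correct quantifier order, with the boundedness coming from an external regularity hypothesis on $u$ rather than from the Taylor--Lagrange statement itself. The strict positivity $K>0$ also motivates the offset in $M_4+1$, since for the sample equation $\frac{d^2u}{dx^2}=1$ the solution is quadratic and $u^{(4)}\equiv 0$, which would otherwise force $K=0$. Because the argument is otherwise identical to Lemma~\ref{lem:lem1}, the Coq proof script can largely be reused under the substitution $\Delta x\mapsto -\Delta x$.
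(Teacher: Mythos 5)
Your proposal is correct and follows essentially the same route as the paper: apply Taylor--Lagrange with $n=3$ centered at $x$ evaluated at $x-\Delta x$, note that $(-\Delta x)^4=(\Delta x)^4$, choose $\delta$ so that the backward interval stays inside $(a,b)$ (the paper takes $\delta=x-a$), and take $K$ from the extrema of $\left|\frac{d^4u}{dx^4}\right|$ on the resulting compact interval via continuity. Your only departure is the $+1$ offset to guarantee $K>0$ strictly, a detail the paper's choice $K=\max\left(\left|\frac{d^{4}u(G)}{dx^{4}}\right|,\left|\frac{d^{4}u(F)}{dx^{4}}\right|\right)$ glosses over.
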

\end{small}
Here, $G(x)$ is the Lagrange remainder in the expansion of $u(x-\Delta x)$ up to degree 3 and is defined as:
\begin{equation}\label{def_2}\small
G(x) \overset{\Delta}{=}u(x-\Delta x)-u(x)+\Delta x \frac{du}{dx}\Big|_x-\frac{1}{2!}(\Delta x)^{2}\frac{d^{2}u}{dx^{2}}\Big|_x+\frac{1}{3!}(\Delta x)^{3}\frac{d^{3}u}{dx^{3}}\Big|_x
\end{equation}
Thus, Lemma~\ref{lem:lem2} states that the Lagrange remainder $G(x)= \frac{1}{4!}(\Delta x)^4 \frac{d^4 u(\xi)}{dx^4}$ is  of order $(\Delta x)^4$ for all $\xi \in (x-\Delta x, x)$.

Both the lemmas are a straightforward  application of the Taylor--Lagrange theorem (Theorem~\ref{Taylor_Lagrange}), and are crucial to the formalization of the proof of the consistency of the finite difference scheme.

Next, we present an informal proof of the theorem followed by a discussion on the formal proof of the consistency theorem.
\begin{proof}\small
%\jb{you mean \ref{lem:lem1}?}
\begin{equation}\label{lemma_1}\small
    |F(x)|\leq M (\Delta x )^4 \quad \text{[From Lemma~\ref{lem:lem1}]}
\end{equation}
%\jb{you mean \ref{lem:lem2}?}
\begin{equation}\label{lemma_2}\small
    |G(x)| \leq K (\Delta x)^4 \quad \text{[From Lemma~\ref{lem:lem2}]}
\end{equation}
Adding equation (\ref{lemma_1}) and (\ref{lemma_2}), we get:
\begin{align}\small
 &|F(x)|+|G(x)| \leq (M+K) (\Delta x)^{4} \nonumber \\
\implies&|F(x)+ G(x)| \leq (M+K) (\Delta x)^{4} \nonumber \\
&\text{[Using the triangle inequality, $(|F(x)+G(x)| \leq |F(x)|+|G(x)|)$ ]}\nonumber \\
\implies& |F(x)+G(x)| \leq \Gamma (\Delta x)^ {4} \quad(\text{Instantiating} \Gamma := M+K) \label{cons_res}
\end{align}
Unfolding the definitions $F(x)$ and $G(x)$, and doing the algebra we get:
\begin{align}
&\Big |u(x+\Delta x)-2u(x)+u(x-\Delta x)-(\Delta x)^{2}\frac{d^{2}u}{dx^{2}}\Big| \leq \Gamma (\Delta x ^4) \nonumber\\
\implies &\Big |\frac{u(x+\Delta x)-2u(x)+u(x-\Delta x)}{(\Delta x)^{2}}-\frac{d^{2}u}{dx^{2}}\Big| \leq \Gamma (\Delta x ^2)\label{final_FD} \quad \textbf{[QED]}
\end{align}
\end{proof}
An important point to note is that the condition $|F(x)|+|G(x)|\leq M(\Delta x)^4+K(\Delta x)^4$ holds when $0<|\Delta x|<\gamma$, where $\gamma$ is as defined in (\ref{cons_2}). We therefore choose, $\gamma = min(\eta, \delta)$, where $\eta$ is such that, $|F(x)|\leq M(\Delta x)^4$ holds when $0<|\Delta x|<\eta$, and $\delta$ is such that, $|G(x)|\leq K(\Delta x)^4$ holds when $0<|\Delta x|<\delta$.

\subsection{Formalization in the Coq Proof assistant}
We followed the proof above and formalized it in the Coq proof assistant.
To apply  the Taylor--Lagrange theorem \cite{martin2013certified}  to the consistency analysis of a central difference approximation, we broke down the theorem statement into two lemmas as discussed in the previous section. Therefore, in this section, we will discuss the proof of Lemma~\ref{lem:lem1} and~\ref{lem:lem2}.
\subsubsection{Proof of Lemma~\ref{lem:lem1}:}
Formally Lemma~\ref{lem:lem1} %\jb{you mean \ref{lem:lem1}? Please use labels and references everywhere, it avoids those confustions.} 
is stated in Coq as:
\begin{small}
\begin{verbatim}
Lemma taylor_uupper (x:R): Oab x-> exists eta: R, eta>0 /\ 
    exists M :R, M>0  /\ forall dx:R, dx>0 -> Oab (x+dx) -> 
    (dx<eta -> Rabs(D 0 (x+dx)- Tsum 3 x (x+dx))<=M*(dx^4)).
\end{verbatim}
\end{small}
In the proof of the Lemma, existential quantification associated with $\eta$ and $M$ has to be addressed. We chose $\eta$ as $b-x$, since the interval in which we are studying Taylor--Lagrange for $u(x+\Delta x)$ is $[x,b]$. Since $\Delta x \in (x,b)$ and $\Delta x < \eta$, it seems logical to chose $\eta=b-x$. For the choice of $M$, we obtained extreme bounds in the interval. Since the function $u$ and its derivatives are continuous in a compact set $[x,b]$, we are guaranteed to get maximum and minimum values. In Coq, we applied the lemma \texttt{continuity\_ab\_max} to obtain a maximum value, $\left(\frac{d^{4}u}{dx^{4}}\right)_{max}=\frac{d^{4}u(F)}{dx^{4}}$ such that $\frac{d^{4}u(\xi)}{dx^{4}}\leq \frac{d^{4}u(F)}{dx^{4}}, \forall \xi \in [x,b]$. Similarly, we apply the lemma \texttt{continuity\_ab\_min} to obtain a minimum value, $\left(\frac{d^{4}u}{dx^{4}}\right)_{min}=\frac{d^{4}u(G)}{dx^{4}}$ such that $\frac{d^{4}u(G)}{dx^{4}}\leq \frac{d^{4}u(\xi)}{dx^{4}}, \forall \xi \in [x,b]$. \\
Thus, $M$ is chosen as $M=max\left(\left|\frac{d^{4}u(G)}{dx^{4}}\right|,\left|\frac{d^{4}u(F)}{dx^{4}}\right|\right)$. With this choice of $M$, we can bound the Lagrange remainder or the trunction error from above and thus prove Lemma~\ref{lem:lem1}. 
\subsubsection{Proof of Lemma~\ref{lem:lem2}:}
Formally Lemma~\ref{lem:lem2} %\jb{you mean \ref{lem:lem2}?} 
is stated in Coq as:
%\jb{Indent this better so it's mor readable}
\begin{small}
\begin{verbatim}
Lemma taylor_ulower (x:R): Oab x -> exists delta: R, delta>0 /\ 
    exists K :R, K>0 /\ forall dx:R, dx>0 ->Oab (x-dx) -> 
    (dx<delta -> Rabs(D 0 (x-dx)-Tsum 3 x (x-dx))<=K*(dx^4)).
\end{verbatim}
\end{small}
The proof of Lemma~\ref{lem:lem2} follows the same approach as that of Lemma~\ref{lem:lem1}. Here, we chose $\delta $ as $x-a$, since the interval in which we are studying Taylor--Lagrange theorem for $u(x-\Delta x)$, $\Delta x \in (a,x)$, and $\Delta x < \delta$. We chose $K$ in the same way as we chose $M$ in Lemma~\ref{lem:lem1} except that the interval in which we obtain maximum and minimum values for $\frac{d^{4}u}{dx^{4}}$ is $[a,x]$ in this case. Thus, $\left(\frac{d^{4}u}{dx^{4}}\right)_{min}=\frac{d^{4}u(G)}{dx^{4}}$, $\left(\frac{d^{4}u}{dx^{4}}\right)_{max}=\frac{d^{4}u(F)}{dx^{4}}$,and $K=max\left(\left|\frac{d^{4}u(G)}{dx^{4}}\right|,\left|\frac{d^{4}u(F)}{dx^{4}}\right|\right), \forall c \in [a,x]$.

To prove the main theorem statement on consistency, we break the statement into Lemma~\ref{lem:lem1} and~\ref{lem:lem2}, by instantiating $\Gamma = M+K$, and $\gamma = \min(\eta, \delta)$, where $(M,\eta)$ and $(K,\delta)$ have been defined as in Lemma~\ref{lem:lem1} and~\ref{lem:lem2} respectively, in the manner shown in section~(\ref{point_const}). To implement this instantiation, we have to carefully \textit{destruct} the lemmas introduced in the theorem statement. Then, we simply apply lemma~\ref{lem:lem1} and~\ref{lem:lem2}, to complete the main proof.

%\jb{Return on experience: how long did it take? What was difficult? What took the longest? What was surprising? Which things would you do differently?}

\subsection{Relating  pointwise consistency to the Lax equivalence theorem}
%\jb{apply consistency of Section (\ref{point_const}) to prove consistency of equation $\frac{d^{2}u}{dx^2}=1$. We never prove stability}
In this section, we relate the proof of consistency from Section~\ref{point_const} with the Lax equivalence Theorem~\ref{Lax}. The numerical discretization of the differential equation can be expressed in the discrete domain as:
\begin{equation}\label{FD_scheme}
\small
    \underbrace{
    \frac{1}{h^{2}}
    \begin{bmatrix}
    1 & 0 & 0 & 0 & \hdots & 0\\
    1 &-2 & 1& 0 & \hdots & 0\\
    \vdots& \ddots &\ddots&\ddots& &\vdots\\
    0 & \hdots & 1 & -2 & 1 & 0\\
    0& \hdots & 0 & 1 & -2 & 1\\
     0 &\hdots & 0 & 0 & 0  &1
    \end{bmatrix}
    }_\text{$A_{h}$}
    \underbrace{
    \begin{bmatrix}
    u_{o}\\
    u_{1}\\
   \vdots\\
    u_{N-2}\\
    u_{N-1}\\
    u_{N}
    \end{bmatrix}
    }_\text{$r_{h}u$}=
    \underbrace{
    \begin{bmatrix}
    0\\
    1\\
    \vdots\\
    1\\
    1\\
    0
    \end{bmatrix}
    }_\text{$s_{h}Au$}
\end{equation}
Comparing with the statement of consistency (\ref{consistency}), we have
\begin{small}
\begin{equation}\label{FD1}
\small
    \lim_{h \to 0}\left|\left |
   \frac{1}{h^{2}}
    \begin{bmatrix}
    1 & 0 & 0 & 0 & \hdots & 0\\
    1 &-2 & 1& 0 & \hdots & 0\\
    \vdots& \ddots &\ddots&\ddots& &\vdots\\
    0 & \hdots & 1 & -2 & 1 & 0\\
    0& \hdots & 0 & 1 & -2 & 1\\
     0 &\hdots & 0 & 0 & 0  &1
     \end{bmatrix}
    \begin{bmatrix}
    u_{o}\\
    u_{1}\\
    \vdots\\
    u_{N-2}\\
    u_{N-1}\\
    u_{N}
    \end{bmatrix}-
    \begin{bmatrix}
    0\\
    1\\
    \vdots\\
    1\\
    1\\
    0
    \end{bmatrix}
    \right| \right|
     = \lim_{h \to 0}\left|\left|
    \begin{bmatrix}
    \frac{u_{o}}{h^2}\\
    \frac{u_{o}-2u_{1}+u_{2}}{h^{2}}-1\\
    \frac{u_{1}-2u_{2}+u_{3}}{h^{2}}-1\\
    \vdots\\
    \frac{u_{N-2}-2u_{N-1}+u_{N}}{h^{2}}-1\\
    \frac{u_{N}}{h^2}
    \end{bmatrix}
    \right| \right|=0
\end{equation}
\end{small}
Taking the vector norm in the $L_{1}$ sense, $||.||_{1}$, equation (\ref{FD1}) can be written as:
\begin{equation}
\lim_{h \to 0} \Big[\left|\frac{u_{o}}{h^2}\right|+\left|\frac{u_{o}-2u_{1}+u_{2}}{h^{2}}-1\right| +..+ \left|\frac{u_{N-2}-2u_{N-1}+u_{N}}{h^{2}}-1\right|+ \left|\frac{u_{N}}{h^2}\right|\Big]=0\label{term_1}
\end{equation}
$\lim_{h \to 0} \frac{u_{o}}{h^2}=0$ and $\lim_{h \to 0} \frac{u_{N}}{h^2}=0$, trivially because of the boundary conditions we imposed, i.e. $u_{o}=0$ and $u_{N}=0$. \textcolor{black}{The norm used in~(\ref{FD1}) are in the space $Y_h$, i.e., $||.||_{Y_h}$}.\\
This reduces to proving:
\begin{equation} \label{FD2}\small
  \sum_{i=1}^{N-1}  \lim_{h \to 0} \left| \frac {u_{i-1}-2u_{i}+u_{i+1}}{h^{2}}-1\right|=0
\end{equation}
But from the Taylor--Lagrange analysis discussed in section~(\ref{point_const}), we have
\begin{equation}\label{FD3}\small
    \left |\frac {u_{i-1}-2u_{i}+u_{i+1}}{h^{2}}- \frac{d^{2}u}{dx^{2}}\Big|_{x_i} \right| \leq Ch^2 
\end{equation}
where $C$ is a constant, and $u_{i}=u(x_i),  u_{i-1}=u(x_i -h), u_{i+1}=u(x_i +h)$. Substituting $\left. \frac{d^{2}u}{dx^{2}} \right|_{x_{i}}=1$, and using the inequality (\ref{FD3}) and equation(\ref{FD2}), we get
\begin{equation}\small
    \sum_{i=1}^{N-1} 0 \leq \sum_{i=1}^{N-1}  \lim_{h \to 0} \left| \frac {u_{i-1}-2u_{i}+u_{i+1}}{h^{2}}-1\right| \leq \sum_{i=1}^{N-1} \lim_{h \to 0} |C h^2|
\end{equation}
But, 
  $ \sum_{i=1}^{N-1} \lim_{h \to 0} |C h^2|=0$. Hence, using the sandwich theorem, we prove that
  \begin{small}
\begin{equation*}
      \sum_{i=1}^{N-1}  \lim_{h \to 0} \left| \frac {u_{i-1}-2u_{i}+u_{i+1}}{h^{2}}-1\right|=0 \qquad \textbf{[QED]}
\end{equation*}
\end{small}
\subsection{Formalization in Coq}
In order to represent, $x_{i},\; i=0..N$, 
we define $x$ of type: \texttt{nat $\to$ R}. %i.e.,
%\begin{small}
%\begin{verbatim}
 %   Variable x:nat -> R.
%\end{verbatim}
%\end{small}
The boundary conditions are imposed as hypothesis statements:
\begin{small}
\begin{verbatim}
    Hypothesis u_0 : (D 0 (x 0))= 0.
    Hypothesis u_N: (D 0 (x N)) =0.
\end{verbatim}
\end{small}
The differential equation is defined as:
\begin{small}
\begin{verbatim}
Hypothesis u_2x: forall i:nat, (D 2 (x i)) =1.
\end{verbatim}
\end{small}

Equation (\ref{FD2}) is formalized as a lemma statement:
\begin{small}
\begin{verbatim}
Lemma lim_sum:is_lim (fun h:R => 
  sum_n_m (fun i:nat =>Rabs (( D 0 (x i -h) -2* (D 0 (x i)) 
    + D 0 (x i +h))*/(h^2) -1)) 1%nat (pred N)) 0 0.
\end{verbatim}
\end{small}
This is where we integrate the proof of pointwise consistency of the FD scheme from section (\ref{finite}).

The main theorem statement which is an application of the statement of consistency required in the proof of Lax equivalence theorem from section (\ref{Lax_section}) is as follows:
\begin{small}
\begin{verbatim}
Theorem consistency_inst: forall (U:X) (f:Y) (h:R) (uh: Xh h)
 (rh: forall (h:R), X -> (Xh h)) (sh: forall (h:R), Y->(Yh h))
 (E: Y->X) (Eh:forall (h:R),(Yh h)->(Xh h)), 
 is_lim (fun h:R => norm (minus (Ah h (rh h U)) (sh h (A U)))) 0 0.
\end{verbatim}
\end{small}
We note here that the above-mentioned formalization is not unique to the second order scheme that we discussed. The approach we discuss can easily be generalized to verify consistency of any finite difference scheme. The crucial step in such a generalization is the appropriate instantiation of the $A_{h}$ matrix and the vectors $r_{h}u$ and $s_{h}Au$.
  
\section{Stability of the scheme}\label{stability_section}
In this section we discuss the stability of the scheme  $\mathcal{N}_h$. From section~\ref{Lax_section}, stability of a numerical scheme requires the solution operator $E_{h}=A_{h}^{-1}$ to be uniformly bounded. We prove this by bounding the eigenvalues of $E_{h}$ uniformly. Eigenvalues of $E_{h}$ are just inverse of the eigenvalues of $A_{h}$. A formal proof of this can be referred to in the Appendix~\ref{inverse_spectrum}.

We will first discuss a generalized framework for the formalization of stability for a symmetric tri-diagonal matrix in Coq. We denote this matrix with $A_{h}(a,b,c)$ with $c=a$ for symmetry. This notation means that $b$ is on the diagonal, $c$ is on the upper diagonal and $a$ is on the lower diagonal. All the other entries are zero. Since we are treating stability from a spectral viewpoint, we next discuss the formalization of the Eigen system for $A_{h}(a,b,a)$.

\subsection{Lemma to verify that the eigenvalues and eigenvectors belong to the spectrum of $A_{h}(a,b,a)$}
Analytical expressions for the eigenvalues and eigenvectors of $A_{h}(a,b,c)$ are given by:
\begin{small}
\begin{equation*}
\lambda_{m}=b+2\sqrt{ac}\cos{\left[\frac{m\pi}{N+1}\right]}; \quad   s_{m}=\left(s_{j}\right)_{m}=\left[\frac{a}{c}\right]^{j-1/2}\sqrt{\frac{2}{N+1}}\sin{\left[j \frac{m\pi}{N+1}\right]}\;
\end{equation*}
\end{small} $ \forall m,j = 1..N$.
In Coq, we defined $\lambda_m$ and $s_m$ as follows:
\begin{small}
\begin{verbatim}
Definition Eigen_vec (m N:nat) (a b c:R):= mk_matrix N 1%nat (fun i j => 
    sqrt ( 2 / INR (N+1))*(Rpower (a */c) (INR i +1 -1*/2))* 
        sin(((INR i +1)*INR(m+1)*PI)*/INR (N+1))).

Definition Lambda (m N:nat) (a b c:R):= mk_matrix 1%nat 1%nat (fun i j => 
    b + 2* sqrt(a*c)* cos ( (INR (m+1) * PI)*/INR(N+1))).
\end{verbatim}
\end{small}Since naturals in Coq start with 0, we write \texttt{INR (m+1)} and \texttt{INR i+1}.  

We then formally verify that the analytical expressions for the pair $(\lambda_{m}, s_{m})$ indeed belong to the spectrum of $A_{h}$. From now on, we will refer to $A_{h}(a,b,a)$ as $A_{h}$ for the sake of brevity.  
In Coq, we state this formally as:
\begin{small}
\begin{verbatim}
Lemma eigen_belongs (a b c:R): forall (m N:nat), (2 < N)%nat -> 
    (0 <= m < N)%nat -> a=c /\ 0<c-> (LHS m N a b c) = (RHS m N a b c).
\end{verbatim}
\end{small}
where, $LHS\overset{\Delta}{=}A_{h}s_{m}$ and $RHS\overset{\Delta}{=}s_{m}\lambda_{m}$. Here we used the definition of eigenvalue-eigenvector, i.e., $A_{h} s_{m}\overset{\Delta}{=}\lambda_{m}s_{m}$. Formalizing the proof of the lemma \texttt{eigen\_belongs} was challenging due to the structure of the matrix $A_{h}$. $A_{h}$ is a tri-diagonal matrix with non-zero entries on the diagonal, sub-diagonal and super-diagonal. The other entries are zero and hence the matrix is sparse.
\begin{equation}\small
\label{sparse_sum}
\therefore\;
\underbrace{
    \sum_{j=0}^{N-1} {A_{h} (i,j) s_{m}(i)}
    }_\text{$A_{h}(i,j) \neq 0$} + 
    \underbrace{
    \sum_{j=0}^{N-1} {A_{h} (i,j) s_{m}(i)}
    }_\text{$A_{h}(i,j) =0$} = \lambda_{m} s_{m}(i); \quad 0\leq i \leq N-1
\end{equation}
In Coq, we have to carefully destruct the matrix $A_{h}$ to separate the non-zero and zero sums in the LHS of equation~(\ref{sparse_sum}). The idea is to do a case analysis on the row-index $i$, and has been illustrated in figure~(\ref{tridiagonal}) in the Appendix~\ref{lemma_eigen}. Details on the formal proof of the zero and non-zero cases are presented in Appendix~\ref{lemma_eigen}.

Next, we discuss formalization of the boundedness of the matrix norm of $E_{h}=A_{h}^{-1}$. We have used an explicit formulation of $A_{h}^{-1}$~\cite{hu1996analytical} in our formalization and we verify this formally using the definition: $A_{h}^{-1}A_{h}=I \; \land \;  A_{h}A_{h}^{-1}=I$. Details on the proof can be referred to in the Appendix~\ref{invertible_check}.

\subsection{ Lemma on the boundedness of the matrix norm for scheme $\mathcal{N}_{h}$}
Here, we have used the definition of the spectral (2-norm): $||A||_{2} = \rho(A)$,
where $\rho(A)$ is the spectral radius of $A$ and is defined as the maximum eigen-value of A, i.e. $\rho(A)=max_{m} |\lambda_{m}(A)|$. 
For the symmetric tri-diagonal matrix $A_{h}$, $A=E_{h}$ and $\lambda_{m}(E_{h})= 1/ \lambda_{m}(A_{h})$. 
Since $\lambda_{m} (A_{h}) < 0$, $max_{m} |\lambda_{m}(E_{h})|= 1/ |\lambda_{min}(A_{h})|$. Hence, we define the matrix norm in Coq as follows:
\begin{small}
\begin{verbatim}
Definition matrix_norm (N:nat):= 1/ Rabs (Lambda_min N).
\end{verbatim}
\end{small} To show that the matrix norm is uniformly bounded, we need to show that $1/ |\lambda_{min}(A_{h})|$ is uniformly bounded. This is where we instantiate the tri-diagonal matrix $A_{h}$ with the scheme $\mathcal{N}_{h}$. Thus, we prove the following lemma in Coq:
\begin{small}
\begin{verbatim}
Lemma spectral: forall(N:nat),(2<N)%nat -> 1/Rabs(Lambda_min N) <= L^2/4.
\end{verbatim}
\end{small}where $L$ is the length of the domain, independent of $h$, and is constant throughout. \texttt{Lambda\_min} is the minimum eigenvalue for the instantiated matrix,
$A_{h}'= A_{h}(\frac{1}{h^2}, \frac{-2}{h^2}, \frac{1}{h^2})$. We provide a paper proof of this bound in the Appendix~\ref{paper_proof}.

To show that all the eigenvalues have the same bound, we prove that $\frac{1}{\lambda_{min}(A_{h}')}$ is the maximum eigenvalue of $E_{h}'$. The lemma statement is as follows:
\begin{small}
\begin{verbatim}
Lemma eigen_relation: forall (i N:nat), (2<N)%nat ->(0<=i<N)%nat -> 
    Rabs (lam i N) <= 1/ Rabs( Lambda_min N).
\end{verbatim}
\end{small}This completes the proof on the boundedness of the eigenvalues of $E_{h}'$. The lemma, \texttt{eigen\_relation} also shows that the spectral radius of $E_{h}'$ is $\frac{1}{|\lambda_{min}(A_{h}')|}$, and justifies the defintion of \texttt{matrix\_norm}.

We note that the definition of the matrix norm of $A_{h}^{-1}$ is valid only if $A_{h}^{-1}$ is a normal matrix . We therefore verify that  $A_{h}^{-1}$ is normal. The lemma statement is provided in the Appendix~\ref{inverse_normal}. 

We also provide the proof that $A_{h}$ is diagonalizable in the Appendix~\ref{diagonalization}. This helps us to formally establish that the eigen vectors are orthogonal and hence the eigen space is complete.
\subsection{Main stability theorem}
In this section, we integrate all of the previous lemmas to prove the main stability theorem (\ref{stability}).
\begin{small}
\begin{verbatim}
Theorem stability: forall (u:X) (f:Y) (h:R) (uh: Xh h)
    (rh: forall (h:R), X -> (Xh h))(sh: forall (h:R), Y->(Yh h)) 
    (E: Y->X) (Eh:forall (h:R), (Yh h)->(Xh h)),
    exists K:R , forall (h:R), operator_norm(Eh h)<=K.
\end{verbatim}
\end{small}where the operator norm is instantiated with the matrix norm using the following hypothesis:
\begin{small}
\begin{verbatim}
Hypothesis mat_op_norm: forall (u:X) (f:Y) (h:R) (uh: Xh h)
    (rh: forall (h:R), X -> (Xh h))(sh: forall (h:R), Y->(Yh h)) 
    (E: Y->X) (Eh:forall (h:R),(Yh h)->(Xh h)),
    operator_norm (Eh h) = matrix_norm m.
\end{verbatim}
\end{small}

\section{Application of the Lax equivalence theorem to the example problem}
\label{Lax_apply}
In this section, we apply the Lax equivalence theorem that we proved in Section~\ref{Lax_section} to a concrete differential equation $\frac {d^{2}u}{dx^{2}}=1$ and the numerical scheme $\mathcal{N}_h$ given by $\frac { u_{i+1}-2u_{i}+u_{i-1}}{\Delta x^2} = 1$. We recall that the proof of convergence using the Lax equivalence theorem requires that the difference scheme is consistent with respect to the differential equation and is stable. We discussed the proof of consistency of the scheme in Section~\ref{finite} and the stability in Section~\ref{stability_section}. Thus, we apply these proofs to complete the proof of convergence for the scheme. We provide the theorem statement to verify convergence of the scheme in the Appendix~\ref{appendix_A}.

\section{Conclusion and Future work}\label{conclusion}
This work investigated the formalization of convergence, stability and consistency of a finite difference scheme in the Coq proof assistant. Any continuously differentiable function can be approximated by a Taylor polynomial.  The Lagrange remainder of a Taylor series provides an estimate of the \textcolor{black}{truncation} error and we formally proved that this error can be bound by $n^{th}$ power of the discretization step, $\Delta x$, where $n-1$ is the order of the Taylor
polynomial. We  implemented the proof of the consistency of a finite difference scheme by breaking down the theorem statement into lemmas, each corresponding to function values at points neighboring the point of evaluation. These lemmas were  proved individually by applying the Taylor--Lagrange theorem, the proof of which is already formalized in the \texttt{Coq.Interval} library \cite{martin2013certified}.  
Consistency and stability guarantees convergence as stated by the Lax equivalence theorem. Following the proof of the the Lax equivalence theorem, we formally proved convergence of a specific finite difference scheme. Specifically, we proved that the global discretization error could be bounded above by a constant times the local discretization error.
Then, by applying the sandwich theorem for limits, we proved that the convergence condition is satisfied in the limit $\Delta x \to 0$. In the process of formalizing the proof of stability for the numerical scheme, we also developed tools for linear algebra and spectral theory, for the \texttt{Coquelicot} definition of matrices in Coq, which can be reused. As noted earlier, the approach we follow is not specific to the sample numerical scheme, but can be easily extended to other numerical schemes with appropriate \textcolor{black}{instantiation} of the matrix $A_{h}$, and vectors, $r_{h}u$, $s_{h}Au$. Formalization of the proof of orthogonality of the eigenvectors helped us report the missing constant $\sqrt{\frac{2}{N+1}}$ in $s_{m}$ that occurs in most textbooks/literature on numerical analysis.

This work  considered the impact of the discretization error on the convergence of a numerical method to the exact solution. In a practical setting, floating point errors have to be also accounted for, as an accumulation of such errors can lead to deviations from the true solution. In future work, we will extend our results to incorporate floating point errors and their impact on the convergence of finite difference numerical schemes. We also plan on working with iterative solvers, which would be an extension of our current work on direct solvers (explicit inversion of the matrix $A_{h}$). We also plan on working with the Frama-C toolkit~\cite{10.1007/978-3-642-33826-7_16} for verification \textcolor{black}{of existing programs} and be able to discharge the generated verification conditions using the Coq proofs we present in this paper.
%\jb{talk about your work with implementations and Frama-C}

\subsection{ Effort and challenges: }
The total length of the Coq code and proofs is about 14,000 lines, \textcolor{black}{of which about 1200 lines are specific to the scheme. The rest of the formalization can be reused for a generic symmetric tridiagonal matrix}. \textcolor{black}{It took us about 15 months for the entire formalization.} Much of the effort was spent on destructing the matrices and developing required linear algebra tools to handle the matrix manipulation. Since we are treating stability from a spectral point of view, lack of spectral theory for numerical analysis for the \texttt{Coquelicot} definition of matrices has been challenging for us.
 For the proof of consistency, the primary challenge was the right placement of the quantifiers to bound the Lagrange remainder using the definition of big-$O$ notation. %We also had some issues with existential quantification. 
To instantiate $\Gamma =M+K$, we had to carefully destruct the lemmas into the main theorem. \textcolor{black}{We believe that a generic library with an automated implementation of the big-O definitions would save considerable effort here.} We also encountered issues in selecting appropriate instantiations for other existential parameters. In the proof of convergence, we had to carefully construct the application of properties of limit with filters of neighborhoods. 
\newpage

\bibliographystyle{splncs04}
\bibliography{sample-base}
\appendix
\section{Proof of the uniform bound on the eigen values of $A_{h}(1/h^2, -2/h^2,1/h^2)$}\label{paper_proof}
In this section, we provide a paper proof of the uniform boundedness of the eigenvalues of the scheme $\mathcal{N}_h$.
\begin{proof}\small
\begin{align*}
&\lambda _{min}(A_{h}') = \frac{2}{h^2} \left[-1+\cos \left(\frac{\pi}{N+1}\right)\right] \quad \text{[For m=1 in the expression of $\lambda_m$]}\\
&\text{Since all eigenvalues are negative, $\text{min} |\lambda_{m} (A_{h}')|= | \lambda_{min}(A_{h}')|$}, \\
\therefore& \frac{1}{|\lambda_{min}(A_{h'})|}= \frac{1}{\left| \frac{2}{h^2}\left[-1+ \cos \left(\frac{\pi}{N+1}\right)\right]\right|} \implies \frac{1}{|\lambda_{min}(A_{h}')|}= \frac{h^2}{4\sin^{2}\left(\frac{\pi}{2 (N+1)}\right)} \\
&\text{[Using the identity: $-1+\cos(2x)= -2 \sin^{2}(x)$]}\\
& \text{Using the definition, $h\overset{\Delta}{=} \frac{L}{N+1}$, where L is the domain length,}\\
& \therefore \frac{1}{|\lambda_{min}(A_{h}')|}=\frac{L^{2}}{4 (N+1)^{2}\sin^{2}\left(\frac{\pi}{2(N+1)}\right)}= \frac {L^2}{\pi^2}\frac{\pi^2}{4 (N+1)^2 \sin^{2} \left(\frac {\pi}{2(N+1)}\right)}=\frac{L^2}{\pi^2}\frac{x^2}{\sin^{2}(x)}\\
&\text{where}, x=\frac{\pi}{2(N+1)}\\
&\text{Using the relation, } \forall x \in (0,\pi/2],\quad \frac{2x}{\pi}\leq \sin(x), \text{ or, } \frac{x}{\sin(x)} \leq \frac{\pi}{2}, \text{we get :}\frac{x^2}{\sin^{2}(x)}\leq \frac{\pi^2}{4}\\
& \therefore \frac{1}{|\lambda_{min}(A_{h}')|}\leq \frac{L^2}{4} \quad \textbf{[QED]}
\end{align*}
\end{proof} We prove the relation $\forall x \in (0, \pi/2], \quad \frac{x}{\sin(x)} \leq \frac{\pi}{2}$, by using the concavity of $\sin(x)$ in $[0, \pi/2]$.
We define a concave function $f: \mathbb{R}\to \mathbb{R}$ in Coq as follows:
\begin{small}
\begin{verbatim}
Definition concave (f:R->R) (x y c:R):=
    0<=c<=1 -> f(c*x + (1-c) * y) >= c* f x + (1-c) * f y.
\end{verbatim}
\end{small}The proof for $\frac{x^2}{\sin^{2}(x)} \leq \frac{\pi^2}{4}, \; \forall x \in (0, \pi/2]$ is formalized as the following lemma statement in Coq:
\begin{small}
\begin{verbatim}
Lemma spectral_intermed:forall(x:R),0<x<=PI/2 ->(x^2)/(sin x)^2 <=(PI^2)/4. 
\end{verbatim}
\end{small}

\section{Lemmas required to complete the proof of stability:}
\subsection { Lemma to verify the invertibility of $A_{h}$}\label{invertible_check}
In this subsection, we verify that the explicit form of the inverse~\cite{hu1996analytical} we use is indeed the inverse of $A_{h}$, i.e. $A_{h}A_{h}^{-1}= A_{h}^{-1} A_{h}=I$. 
In Coq, we state the following lemma to verify the invertibility of $A_{h}$:
\begin{small}
\begin{verbatim}
Lemma invertible_check (a b:R) : forall (N:nat), (2<N)%nat -> 0<a -> 
    Mk N (b/a) <> 0 -> invertible N (Ah N a b a ) (inverse_A N a b ).
\end{verbatim}
\end{small} Here, $M_{k}$ is the determinant of $A_{h}$ of size $k$. We used the recurrence relation~\cite{hu1996analytical}:  $M_{k} = D \times M_{k-1} - M_{k-2}, \quad D=\frac{b}{a}$. Overall, the approach is similar to the proof of the lemma \texttt{eigen\_belongs}, i.e. we exploit the tridiagonal structure of $A_{h}$.
The proof required us to formalize some properties from combinatorics.

For the scheme that we are considering, $D=-2$. Two important steps that were required to complete the proof of $M_{k} \neq 0$ for the scheme $\mathcal{N}_{h}$ were:
\begin{enumerate}
    \item Proving that $M_{k}=(-1)^k \times (k+1)$: We proved this using strong induction on $k$ and the recurrence relation described above. To get an intuition of why it is true, we observe the values of $M_{k}$ for initial values of $k$: $M_{0} = 1, \quad M_{1}=-2 ,\quad M_{2}=3, \quad M_{3}=-4 \cdots M_{k}= (-1)^k \times (k+1)$
    \item Proving that the determinant, $M_{k} \neq 0$
\end{enumerate}

\subsection{Lemmas on spectrum of $E_{h}$}\label{inverse_spectrum}
In this subsection, we prove that the eigenvalues of $E_{h}$ are just inverse of the eigenvalues of $A_{h}$, while the eigenvectors are the same. This follows from the following informal proof:
\begin{small}
\begin{proof} We start with the definition of Eigen-system $(\lambda_m, s_m)$, 
\begin{align}
&A_{h}s_{m}=\lambda_{m}s_{m} \nonumber\\
&\text{Multiplying by $A_{h}^{-1}$ on both sides and using the definition :} A_{h}^{-1}A_{h}=I, \nonumber\\
&A_{h}^{-1}A_{h}s_{m}=A_{h}^{-1}\lambda_{m}s_{m} \implies s_{m}=\lambda_{m}A_{h}^{-1}s_{m} \nonumber \implies \frac{s_{m}}{\lambda_{m}}=A_{h}^{-1}s_{m} \nonumber
\end{align}
\end{proof}
\end{small}In Coq, we define the following lemma to formalize this proof:
\begin{small}
\begin{verbatim}
Lemma inverse_eigen (m N:nat) (a b:R) : (2< N)%nat -> (0<=m<N)%nat ->
 0<a -> ((invertible N (Ah N a b a) (inverse_A N a b)) /\ 
 (LHS m N a b a= RHS m N a b a)) ->(Eigen_vec m N a b a) =
 Mmult (inverse_A N a b) (Mmult (Eigen_vec m N a b a) (Lambda m N a b a)).
\end{verbatim}
\end{small}

\subsection{Lemma to verify that the $A_{h}^{-1}$ is normal}\label{inverse_normal}
In this subsection, we verify that $A_{h}^{-1}$ is normal. This lemma is stated as:
\begin{verbatim}
Lemma inverse_is_normal (a b:R): forall (N:nat),
 Mmult (inverse_A N a b ) (mat_transpose N (inverse_A N a b )) = 
 Mmult (mat_transpose N (inverse_A N a b )) (inverse_A N a b ).
\end{verbatim}

\subsection{Intermediate lemmas to complete the proof of the lemma eigen\_belongs:}\label{lemma_eigen}
This proof requires some intermediate lemmas which verify certain properties which are as follows:
\subsubsection{Lemmas on structure of the matrix:} 
In this subsection, we provide the lemmas that verify the structure of the matrix, i.e. the diagonal entries are $b$, the sub-diagonal entries are $a$ and the super-diagonal entries are $c$.
Mathematically, the lemmas say: $A_{h}(i,i)=b$, $A_{h}(i-1,i)=a$, $A_{h}(i,i+1)=c\quad \forall i=1\cdots N-2$. For the first and last rows, we have the structure as: $A_{h}(0,0)=b, A_{h}(0,1)=c, A_{h}(N-1,N-2)=a$ and $A_{h}(N-1,N-1)=b$. In Coq, we define the following lemmas to verify the above-mentioned structure:
\begin{small}
\begin{verbatim}
Lemma coeff_prop_1 (a b c:R): forall (i N:nat), (2<N)%nat ->
  (0<i <N)%nat -> coeff_mat Hierarchy.zero (Ah N a b c) i (pred i) = a .

Lemma coeff_prop_2 (a b c:R): forall (i N:nat), (2<N)%nat ->
    (i <N)%nat ->coeff_mat Hierarchy.zero (Ah N a b c) i i = b.

Lemma coeff_prop_3 (a b c:R): forall (i N:nat), (2<N)%nat -> 
  (i< pred N)%nat -> coeff_mat Hierarchy.zero (Ah N a b c) i (i + 1) = c.
\end{verbatim}
\end{small}

\subsubsection{Lemmas to handle the zeros case: }
\begin{figure}
\includegraphics[width=\textwidth]{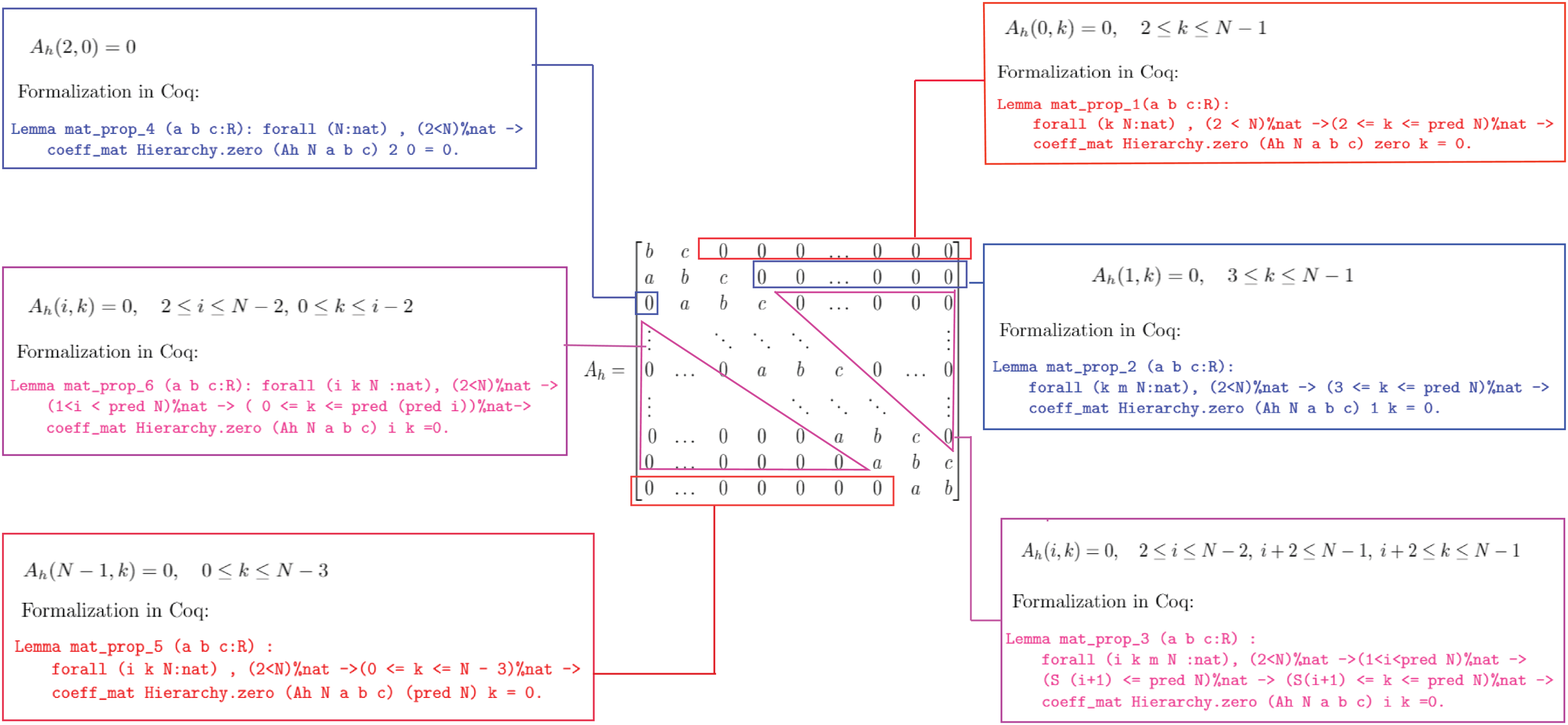}
\caption{Formalizing the tri-diagonal structure of the matrix. This formalization can be used for any tri-diagional system. } \label{tridiagonal}
\end{figure}
A good amount of effort was also required in extracting zero entries in the matrix $A_{h}$ and proving that their sum equals zero. This again exploits the structure of the matrix, illustrated in figure~(\ref{tridiagonal}). Two important lemmas that played a pivotal role in this proof are :
\begin{small}
\begin{verbatim}
Lemma sum_const_zero:forall(n m:nat),(n<=m)%nat-> sum_n_m(fun _=>0)n m=0.
\end{verbatim}
\end{small}
Mathematically this means:  $\forall n,m:nat, (n \leq m), \sum_{n}^{m} 0=0$
\begin{small}
\begin{verbatim}
Lemma sum_n_m_zero(a:nat -> G)(n m:nat):(m<n)%nat -> sum_n_m a n m = zero.
\end{verbatim}
\end{small}
Mathematically this means: $ \forall (a: nat \rightarrow G), (n,m:nat)$, $ (m<n), \; \sum_{n}^{m}a =0$,
where, $a$ is a function from naturals to an abelian group (G), in our case, it is reals.
The first lemma was proved by us but the second one is already present in the \texttt{Coquelicot} library.
\subsubsection{Lemmas to handle the non-zero case:}
The other part of the proof is to equate the sum of non-zero entries in LHS to a non-zero entry in RHS. i.e. $\vec{Ah}_{i} \cdot \vec{s_{m}}=\lambda_{m} s_{mi}$, where the $Ah_{i}$ represents the $i^{th}$ row of the matrix and $s_{mi}$ denotes the $i^{th}$ component of the Eigen-vector $s_{m}$ and $\lambda_{m}$ is a scalar. In Coq, considering $i=0$, for example, would translate to the lemma statement:
\begin{small}
\begin{verbatim}
Lemma i_0_j (a b c:R):
forall (m N:nat), (2<N)%nat -> (0<=m<N)%nat -> a=c /\ 0<c->
  mult (coeff_mat Hierarchy.zero (Ah N a b c ) zero 0)
    (coeff_mat Hierarchy.zero (Eigen_vec m N a b c ) 0 0) +
  mult (coeff_mat Hierarchy.zero (Ah N a b c ) zero 1)
    (coeff_mat Hierarchy.zero (Eigen_vec m N a b c ) 1 0) =
  mult (coeff_mat Hierarchy.zero (Eigen_vec m N a b c ) zero 0)
    (coeff_mat Hierarchy.zero (Lambda m N a b c ) 0 0).
\end{verbatim}
\end{small}
We are not providing other lemmas here in the interest of space, but they can be referred in the attached code.

\section{ Diagonalization of $A_{h}$}\label{diagonalization}\label{diagonalization}
In this section, we discuss the lemmas required to prove that $A_{h}$ is diagonalizable, i.e. $A_{h}= S \Lambda S^{T}$, where S is the matrix of eigenvectors and $\Lambda$ is a diagonal matrix of Eigen-values of $A_{h}$. We first present an informal proof:
\begin{proof}We start with the definition of an Eigensystem:
\begin{equation*}
A_{h}S=S \Lambda \implies A_{h}SS^{T}=S\Lambda S^{T} \implies A_{h}=S \Lambda S^{T}\quad [SS^{T}=I] 
\end{equation*}
\end{proof}Here, we use the fact that $S^{-1}=S^{T}$, since S is orthonormal. We verify this by using the definition of inverse of matrices, i.e. $SS^{T}=S^{T}S=I$. In Coq, we prove the following lemma:
\begin{small}
\begin{verbatim}
Lemma Scond:forall (N:nat) (a b:R), (2<N)%nat -> 0<a ->  
    Mmult (Sm N a b) (Stranspose N a b) = identity N /\ 
        Mmult (Stranspose N a b) (Sm N a b) = identity N.
\end{verbatim}
\end{small}To prove the lemma \texttt{Scond}, we split the proof into two sub-proofs:
\begin{enumerate}
\item i = j, 
\item $i \neq j$
\end{enumerate}For the first case, we have the condition that $\vec{s}_{i} \cdot \vec{s}_{i}=1$, i.e. $||\vec{s}_{i}||^{2}=1$. This reduces to proving that the sum of the following sine-squared series is 1.
\begin{small}
\begin{equation}\label{sin_sqr_sum}
\sum_{m=1}^{N}\frac{2}{N+1}\sin^{2}\left[j \frac{m\pi}{N+1}\right]=1
\end{equation}
\end{small}In Coq, we prove the following lemma to verify (\ref{sin_sqr_sum}):
\begin{small}
\begin{verbatim}
Lemma sin_sqr_sum: forall (i N:nat), (2<N)%nat /\ (0<=i<N)%nat -> 
    sum_n_m (fun l:nat => (2/(INR(N+1)))*
        sin(((INR l+1)* INR (i+1)*PI)*/ INR (N+1)) ^2) 0 (pred N)=1.
\end{verbatim}
\end{small} Here, we make use of the following theorem from~\cite{knapp2009sines}:
\begin{theorem}\label{sin_sqr}
    If $a\;b \in \mathbb{R}$ and $d \neq 0$ and n is a positive integer, \\$\sum_{k=0}^{n-1}cos(a+kd)=\frac{\sin{nd/2}}{\sin{d/2}}cos\left(a+\frac{(n-1)d}{2}\right)$
\end{theorem}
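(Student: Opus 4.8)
The plan is to prove this classical Dirichlet-kernel-style identity by a telescoping argument, which stays entirely within real analysis and therefore maps cleanly onto the trigonometric lemmas already present in Coq's standard library (avoiding the detour through complex exponentials that gives the slicker but formalization-unfriendly geometric-series proof). The engine of the proof is the product-to-sum identity $2\sin(d/2)\cos(a+kd) = \sin\!\left(a+(k+\tfrac{1}{2})d\right) - \sin\!\left(a+(k-\tfrac{1}{2})d\right)$, which follows from $2\sin A\cos B = \sin(A+B)+\sin(A-B)$ together with the oddness of $\sin$.

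First I would multiply the target sum by the factor $2\sin(d/2)$ and rewrite each summand using the identity above, turning the series into a telescoping sum. Setting $S_k \overset{\Delta}{=} \sin\!\left(a+(k-\tfrac12)d\right)$, each term becomes $S_{k+1}-S_k$, so summing $k=0$ to $n-1$ collapses to $S_n - S_0 = \sin\!\left(a+(n-\tfrac12)d\right) - \sin\!\left(a-\tfrac{d}{2}\right)$. I would then apply the sum-to-product identity $\sin X - \sin Y = 2\cos\!\left(\tfrac{X+Y}{2}\right)\sin\!\left(\tfrac{X-Y}{2}\right)$ with $X = a+(n-\tfrac12)d$ and $Y = a-\tfrac{d}{2}$; a short computation gives $\tfrac{X+Y}{2} = a+\tfrac{(n-1)d}{2}$ and $\tfrac{X-Y}{2} = \tfrac{nd}{2}$, so the collapsed sum equals $2\cos\!\left(a+\tfrac{(n-1)d}{2}\right)\sin\!\left(\tfrac{nd}{2}\right)$. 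Dividing both sides by $2\sin(d/2)$ yields the stated formula.

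In Coq I would carry out the telescoping by induction on $n$: the base case $n=1$ reduces to the single-term identity, and the inductive step adds one summand and reuses the product-to-sum rewrite. The main obstacle is less the algebra than the side condition: the hypothesis $d \neq 0$ does not by itself guarantee $\sin(d/2)\neq 0$ (e.g. $d = 2\pi$), so before dividing I must discharge $\sin(d/2)\neq 0$. In the intended application $d$ has the form $\tfrac{m\pi}{N+1}$ with $1\le m\le N$, for which $0 < d/2 < \pi$ and hence $\sin(d/2) > 0$; I would thread this positivity assumption through the lemma so the final division is justified. Beyond that, the practical friction I expect is bookkeeping of the half-integer arguments $(k\pm\tfrac12)d$ and orienting the product-to-sum and sum-to-product rewrites to match Coq's exact spellings of those lemmas, rather than any conceptual difficulty.
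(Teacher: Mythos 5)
Your telescoping proof is correct: multiplying through by $2\sin(d/2)$, rewriting each summand via $2\sin(d/2)\cos(a+kd)=\sin\bigl(a+(k+\tfrac12)d\bigr)-\sin\bigl(a+(k-\tfrac12)d\bigr)$, collapsing the sum, and converting $\sin X-\sin Y$ back to a product does yield the stated formula, and the induction-on-$n$ strategy for the Coq version is sound provided you carry the multiplied-through form $2\sin(d/2)\sum_{k=0}^{n-1}\cos(a+kd)=\sin\bigl(a+(n-\tfrac12)d\bigr)-\sin\bigl(a-\tfrac{d}{2}\bigr)$ through the induction and defer the division to a single final step. The comparison with the paper is unusual, though: the paper does not prove this theorem at all. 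It cites Knapp for the identity and introduces it into the Coq development as a \texttt{Hypothesis} (\texttt{cos\_series\_sum}), i.e.\ an unproved assumption on which the orthogonality lemmas \texttt{sin\_sqr\_sum} and \texttt{cos\_sqr\_sum} then rest. So your argument is not an alternative route so much as the missing proof; formalizing it would discharge one of the development's standing axioms. Your observation about the side condition is also substantive rather than cosmetic: with only $d\neq 0$ the statement is literally false --- for $d=2\pi$ the left-hand side is $n\cos a$ while the right-hand side divides by $\sin(\pi)=0$ --- so both the theorem and the Coq hypothesis should require $\sin(d/2)\neq 0$. That strengthened hypothesis is available in every instantiation the paper uses, since the common differences $\tfrac{2j\pi}{N+1}$, $\tfrac{(i-j)\pi}{N+1}$ and $\tfrac{(i+j+2)\pi}{N+1}$ all keep $0<|d|/2<\pi$.
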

where $\sin^{2}{(\theta)} = (1-\cos {(2\theta)})/2$. We state the Theorem~\ref{sin_sqr}, using the following hypothesis statement in Coq:
\begin{small}
\begin{verbatim}
Hypothesis cos_series_sum: forall (a d:R) (N:nat), d <>0->
    sum_n_m (fun l:nat => cos (a+(INR l)*d)) 0 (pred N)= 
    sin(INR N*d/2)* cos(a+INR(N-1)*d/2)*/ sin(d/2).
\end{verbatim}
\end{small}We then use the hypothesis \texttt{cos\_series\_sum} to prove the lemma \texttt{sin\_sqr\_sum}.\\
\mohit{details added here}
For the second case, we have the orthogonality condition $\vec{s}_{i} \cdot \vec{s}_{j}=0, \; i \neq j$. This reduces to proving:
\begin{equation}\label{neq_1}
\sum_{k=0}^{N-1} \sin{\left[ (k+1) \frac{(i+1)\pi}{N+1}\right]} \sin{\left[(k+1) \frac{(j+1)\pi}{N+1}\right]}=0
\end{equation} since, $\frac{2}{N+1}$ is a constant, it can be taken outside the summation.\\
Using the trigonometric identity, $$\sin{A}\sin{B}=\frac{1}{2} \left[\cos{(A-B)} - \cos{(A+B)} \right]$$ we can reduce~(\ref{neq_1}) into sums of cosines as follows:
\begin{equation}\label{neq_2}
    \frac{1}{2}\sum_{k=0}^{N-1} \cos{\Big[(k+1)\frac{(i-j)\pi}{N+1}\Big]}-
    \frac{1}{2}\sum_{k=0}^{N-1} \cos{\Big[(k+1)\frac{(i+j+2)\pi}{N+1}  \Big]}=0
\end{equation}
Using Theorem~(\ref{sin_sqr}), we can further reduce each sum in equation~(\ref{neq_2}) into the product of sine and cosine. By doing some algebra, we prove that if $(i-j)$ and $(i+j+2)$ are simultaneously even or they are simultaneously odd, the sums in equation~(\ref{neq_2}) cancel out. We further note that it is always the case that $(i-j)$ and $(i+j+2)$ are simultaneously even or they are simultaneously odd. We provide an informal proof of this fact as follows:
\begin{proof}
Case 1: $(i-j)$ is even: 
\begin{align}
    &\exists m:nat, (i-j) = 2m \nonumber \\
    & \implies i= 2m+j \nonumber \\
    & \implies i+j+2= 2m+j+j+2 \nonumber \\
    & \implies i+j+2 = 2* (m+j+1) \quad \therefore  \text{ Even} \nonumber
\end{align}
Case 2: $(i-j)$ is odd:
\begin{align}
    & \exists m:nat, (i-j) = 2m+1 \nonumber \\
    & \implies i= j+ 2m +1 \nonumber \\
    & \implies i+j+2= j+2m+1+j+2 \nonumber \\
    & \implies i+j+2= 2* (j+m+1)+1 \quad \therefore \text{ Odd} \nonumber
\end{align}
\end{proof}and vice-versa for each cases. This completes the proof of orthogonality of the Eigen vectors. In Coq, we prove the following lemma to verify~(\ref{neq_2}):
\begin{small}
\begin{verbatim}
Lemma cos_sqr_sum: forall (i j N:nat), 
(2<N)%nat /\ (0<=i<N)%nat /\ (0<=j<N)%nat /\ (i<>j) -> 
sum_n_m (fun l:nat => mult(/INR(N+1))
    (cos((INR(i) - INR(j)) * PI / INR (N + 1) + 
        INR l * (INR(i) - INR(j)) * PI / INR (N + 1)) - 
    cos(INR(i+j+2)*PI */ INR(N+1) + 
        INR l * INR(i+j+2)*PI */ INR(N+1)))) 0 (pred N)=0.
\end{verbatim}
\end{small}

\section{Application of the Lax--Equivalence theorem to the scheme $\mathcal{N}_h$}\label{appendix_A}
We define the following theorem statement to prove the convergence of the numerical scheme in Coq:
\begin{small}
\begin{verbatim}
Theorem scheme_convergence: forall (U:X) (f:Y) (h:R) (uh: Xh h) 
 (rh: forall (h:R), X -> (Xh h)) (sh: forall (h:R), Y->(Yh h)) (E: Y->X) 
 (Eh:forall (h:R), (Yh h)->(Xh h)), 
 is_linear_mapping X Y Aop ->  f=Aop U->
 (* Hypothesis that A is a linear mapping from X to Y*)
 (forall (h:R),is_linear_mapping (Xh h) (Yh h) (Ah_op h))-> 
 (* Hypothesis that Ah is a linear. mapping from Xh to Yh for each h*)
  (forall (h:R), is_bounded_linear X (Xh h) (rh h))->
  (* Hypothesis that rh is a bounded linear
  operatior (restriction) from X to Xh for each h*)  
  (forall (h:R),is_bounded_linear Y (Yh h) (sh h))-> 
  (* Hypothesis that sh is a bounded linear
  operator (restriction) from Y to Yh*) 
  is_bounded_linear Y X E -> 
  (* Hypothesis that E is a bounded linear operator from Y to X*)
  U=E f-> 
  (* Defining solution in continuous space (true solution)*)
  (forall (h:R), is_bounded_linear (Yh h) (Xh h) (Eh h)) -> 
  (* Hypothesis that Eh is a bounded linear operator from Yh to Xh for each h*)
  (forall h:R, is_finite (operator_norm(Eh h))) -> 
   (* Hypothesis that ||Eh|| is finite*)
  (uh= Eh h (sh h f))-> 
   (* Defining a discrete solution uh*)
  ( Ah_op h uh = sh h f)-> (*f =fh*)
  (forall (h:R), rh h U= Eh h (Ah_op h (rh h U)))->
  (*uh =Eh *Ah *uh, where Eh*Ah=I*)
  (forall h:R, minus (Ah_op h (rh h U)) (sh h (Aop U)) <> Hierarchy.zero)->
  
  is_lim (fun h:R= norm (minus (rh h (E(f))) (Eh h (sh h (f))))) 0 0 .
    (*Convergence*)
\end{verbatim}
\end{small}

\end{document}